\newtheorem{theorem}{Theorem}[section]
\newtheorem{lemma}[theorem]{Lemma}
\newtheorem*{lemma*}{Lemma}
\newtheorem{proposition}[theorem]{Proposition}
\theoremstyle{definition}
\newtheorem{definition}[theorem]{Definition}
\theoremstyle{remark}
\newtheorem{remark}[theorem]{Remark}
\numberwithin{equation}{section}
\newcommand{\abs}[1]{\lvert#1\rvert}
\newcommand{\norm}[1]{\lVert#1\rVert}
\newcommand{\A}{\mathbb{A}}
\newcommand{\B}{\mathbb{B}}
\newcommand{\W}{\mathscr{W}}
\newcommand{\R}{\mathbb{R}}
\newcommand{\X}{\mathbb{X}}
\newcommand{\Y}{\mathbb{Y}}
\newcommand{\onto}{\xrightarrow[]{{}_{\!\!\textnormal{onto\,\,}\!\!}}}
\newcommand{\into}{\xrightarrow[]{{}_{\!\!\textnormal{into\,\,}\!\!}}}
\newcommand{\deff}{\stackrel {\textnormal{def}}{=\!\!=} }
\newcommand{\bydef}{\stackrel{\textnormal{def}}{=\!\!=}}
\DeclareMathOperator{\loc}{loc}
\def\le{\leqslant}
\def\ge{\geqslant}
\begin{document}

\title[Monotone Sobolev Mappings]{\Large{Monotone Sobolev Mappings}\\$\textnormal{of planar domains and surfaces} $}
\author[T. Iwaniec]{Tadeusz Iwaniec}
\address{Department of Mathematics, Syracuse University, Syracuse,
NY 13244, USA and Department of Mathematics and Statistics,
University of Helsinki, Finland}
\email{tiwaniec@syr.edu}

\author[J. Onninen]{Jani Onninen}
\address{Department of Mathematics, Syracuse University, Syracuse,
NY 13244, USA and Department of Mathematics and Statistics, P.O. Box 35 (MaD), FI-40014, University of Jyv\"askyl\"a, Finland}
\email{jkonnine@syr.edu}
\thanks{ T. Iwaniec was supported by the NSF grant DMS-1301558 and the Academy of Finland project 1128331.
J. Onninen was supported by the NSF grant DMS-1301570.}

\subjclass[2010]{Primary 30E10; Secondary  46E35, 58E20}


\keywords{Approximation of Sobolev monotone mappings, surfaces, energy-minimal deformations, variational integrals, harmonic mappings, $p$-harmonic equation}

\begin{abstract} An approximation theorem of Youngs (1948)  asserts that a  continuous map between compact oriented topological 2-manifolds (surfaces) is monotone if and only if it is a uniform limit of homeomorphisms. Analogous approximation of Sobolev mappings is at the very heart of Geometric Function Theory (GFT) and Nonlinear Elasticity (NE). In both theories the mappings in question arise naturally as weak limits of energy-minimizing sequences of homeomorphisms. As a result of this, the energy-minimal mappings turn out to be monotone.
In the present paper we show that, conversely, monotone mappings in the Sobolev space $\,\mathscr W^{1,p}\,, \,1<p < \infty\,$, are none other than $\,\mathscr W^{1,p}\,$-weak (also strong) limits of homeomorphisms. In fact, these are limits of diffeomorphisms. By way of illustration, we establish the existence of energy-minimal deformations within the class of Sobolev monotone mappings for $\,p\,$-harmonic type energy integrals.
\end{abstract}

\maketitle

\section{Introduction} There has been recently increasing interest in the Sobolev $\,\mathscr W^{1,p}\,$- homeomorphisms and their weak and strong limits.  In planar domains (or surfaces) and $\, p\geqslant 2\,$,  these limits turn out to be monotone mappings in the topological sense of C.B. Morrey~\cite{Morrey}, see Definition \ref{DefMonotonicity} below. We shall see that, conversely, every $\,\mathscr W^{1,p}\,$-map that is continuous and monotone between Lipschitz domain (this time for any exponent $\,1 < p < \infty\,$) can be approximated by homeomorphisms uniformly and strongly in the Sobolev norm; hence, by $\mathscr C^\infty$-diffeomorphisms, also by piecewise affine homeomorphisms.  A motivation for Sobolev mappings comes from the
study of extremal problems in Geometric Function Theory (GFT) ~\cite{AIMb, IKKO, IKOhopf, IKOdu, IMb, IOan, IOho}. Further motivation comes from the fields of materials science such as Nonlinear Elasticity (NE) ~\cite{Anb, Bac, Cib, MHb, Sib, TNb}.\\
Let us begin with a brief analysis of the $\,p\,$-harmonic energy of homeomorphisms $\,f : \mathbb X \onto \mathbb Y\,$ between bounded domains $\, \mathbb X , \mathbb Y \subset \mathbb R^2\,$.
\begin{equation}
 \mathscr E_p[f] = \int_\mathbb X |Df(x)|^p\,\textnormal d x\;,\;\;\;\;\;\; \textnormal{where}\;\;|Df(x)|^2\;\bydef \textnormal{Tr} [D^*\!f(x)\, Df(x)]
\end{equation}
Hereafter $\,Df(x)\,$ stands for the Jacobian matrix of $\,f\,$ (deformation gradient).
 Denote by $\mathscr H_p(\mathbb X , \mathbb Y)\,$ the class of orientation preserving homeomorphsms  $\,f : \mathbb X \onto \mathbb Y\,$ of finite  energy and
\begin{equation}
\mathscr E_p (\mathbb X, \mathbb Y)  \bydef  \inf_{ f \in \mathscr H_p(\mathbb X , \mathbb Y) } \int_\mathbb X |Df(x)|^p\,\textnormal d x
\end{equation}
The infimum may or may not be attained. If not, we wish to find a map $\,h_\circ \in \mathscr W^{1,p}(\mathbb X , \mathbb Y)\,$ in close proximity to $\,\mathscr H_p(\mathbb X , \mathbb Y)\,$ such that
\begin{equation}\label{equality}
 \int_\mathbb X |Dh_\circ(x)|^p\,\textnormal d x \,=\, \mathscr E_p (\mathbb X, \mathbb Y)
\end{equation}

It is natural to look for $\,h_\circ\,$  as $\,\mathscr W^{1,p}\,$-weak limit of an energy-minimizing sequence of homeomorphisms $\,h_j : \mathbb X \onto \mathbb Y\,$. That this indeed would solve the problem (at least for Lipschitz domains and $\,p\geqslant 2\,$) is far from being obvious. We always have the upper bound
\begin{equation}\label{energybound}
 \int_\mathbb X |Dh_\circ(x)|^p\,\textnormal d x  \leqslant \mathscr E_p (\mathbb X, \mathbb Y)
\end{equation}
Equality occurs if and only if the minimizing sequence $\,h_j \rightharpoonup h_\circ\,$ actually converges strongly in $\,\mathscr W^{1,p}(\mathbb X, \mathbb R^2)\,$, which places $\,h_\circ\,$ in the closest possible proximity to $\,\mathscr H_p(\mathbb X , \mathbb Y)\,$.
Equality (\ref{equality}) is of practical significance.  Indeed, once $\,h_\circ\,$ fails to be injective, it will tell us when to stop the minimizing sequence of homeomorphisms; that is, prior to the conditions favorable for the collapse of injectivity. This is a phenomenon known as \textit{interpenetration of matter}.\\

  \textbf{Monotone Mappings.} The concept is due to C.B. Morrey \cite{Morrey}  (1935). The interested reader is referred to the Proceedings of the Conference on Monotone and Open Mappings \cite{Mc} (1970) and the series of early papers by G.T. Whyburn for further reading.
\begin{definition}\label{DefMonotonicity}
Let $\,\mathbf A\,$ and $\,\mathbf B\,$ be compact metric spaces.
A continuous map  $\,h \colon \mathbf A \onto \mathbf B\,$ is said to be monotone if every fiber $\,h^{-1}\{b\} \subset \mathbf A\,$ of a point $\,b \in \mathbf B\,$ is connected. As shown by G.T. Whyburn  the preimage $\,h^{-1} (\mathcal C) = \{a\in \mathbf A \colon h(a) \in \mathcal C\}\,$ of any connected set $\,\mathcal C \subset \mathbf B\,$ is connected in $\,\mathbf A\,$.
\end{definition}
Let us emphasize that monotone mappings are continuous, by the definition.\\

 While manifolds and Riemannian metric tensors are not the primary issues, it is desirable to keep them in mind since the topological aspects  really crystalize in the manifold setting.
    Thus we choose and fix, as \textit{reference manifolds}, two $\,\mathscr C^1\,$-smooth closed (compact without boundary) oriented Riemannian 2-manifolds $\,\mathscr X\,$ and $\,\mathscr Y\,$ of the same topological type.  We shall consider multiply connected \textit{Jordan domains} $\,\mathbb X\subset \mathscr X\,$ and $\,\mathbb Y \subset \mathscr Y\,$. Precisely, $\,\mathbb X\,$ and $\,\mathbb Y\,$  will be  obtained by removing from $\,\mathscr X\,$ and $\,\mathscr Y\,$ the same number, say $\,0 \leqslant \ell  < \infty\,$, of closed disjoint topological disks. In fact, any pair of topologically equivalent open $\,\mathscr C^1\,$-smooth surfaces with $\,\ell\,$ boundaries can be obtained in this way. Such are planar multiply connected Jordan domains in $\,\widehat{\mathbb R}^2 \simeq \mathbb S^2 \subset \mathbb R^3\,$.

\begin{theorem}[Youngs, \cite{Yo1}] Let $\,\mathbb X \subset \mathscr X\,$ and $\, \mathbb Y \subset \mathscr Y\,$  be Jordan domains of the same topological type. A map $\,h : \overline{\mathbb X} \onto \overline{\mathbb Y}\,$ is monotone if and only if it is a uniform limit of  homeomorphisms $\,h_j : \overline{\mathbb X} \onto \overline{\mathbb Y}\,, j = 1, 2, ... \,$.\\ It follows, in particular, that the boundary mapping $\,h : \partial\mathbb X \onto \partial\mathbb Y\,$ is monotone as well.
\end{theorem}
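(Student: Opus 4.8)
The plan is to prove the two implications separately; the forward direction (homeomorphic approximation $\Rightarrow$ monotone) is soft point-set topology, while the converse is the substance of Youngs' theorem. For the easy direction, suppose $h_j \to h$ uniformly with each $h_j \colon \overline{\mathbb X} \onto \overline{\mathbb Y}$ a homeomorphism, fix $b \in \overline{\mathbb Y}$, and suppose toward a contradiction that $h^{-1}\{b\}$ splits as $F_1 \sqcup F_2$ into two disjoint nonempty compacta. Enclose them in disjoint open sets $U_1 \supset F_1$, $U_2 \supset F_2$ and set $C = \overline{\mathbb X}\setminus(U_1\cup U_2)$, so that $\varepsilon \bydef \dist(b, h(C))>0$. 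Since $\overline{\mathbb Y}$ is locally connected I may choose a connected open neighborhood $V\ni b$ with $V\subset B(b,\varepsilon/2)$. Picking $a_i \in F_i$, for all large $j$ one has $h_j(a_i)\in V$ while $h_j(C)\cap V=\emptyset$; hence $h_j^{-1}(V)$ is a connected set (homeomorphic image of $V$) that meets both $U_1$ and $U_2$ yet lies in $U_1\sqcup U_2$ — impossible. Thus every fiber is connected and $h$ is monotone. The boundary assertion then drops out: a homeomorphism of bordered surfaces carries $\partial\mathbb X$ onto $\partial\mathbb Y$, so $h|_{\partial\mathbb X}$ is a uniform limit of homeomorphisms between the boundary $1$-manifolds and is therefore monotone by the identical argument in dimension one.

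For the converse I would pass to the fiber partition $\mathcal G = \{\,h^{-1}\{b\} : b \in \overline{\mathbb Y}\,\}$ of $\overline{\mathbb X}$. Continuity together with monotonicity make $\mathcal G$ an \emph{upper semicontinuous} decomposition into continua, and $h$ factors as $\overline{\mathbb X} \xrightarrow{\;\pi\;} \overline{\mathbb X}/\mathcal G \xrightarrow{\;\bar h\;} \overline{\mathbb Y}$ with $\bar h$ a homeomorphism. In particular the quotient $\overline{\mathbb X}/\mathcal G$ is a $2$-manifold and, because collapsing a separating continuum would disconnect the connected surface $\overline{\mathbb Y}$, each fiber is a non-separating (cellular) subcontinuum of $\overline{\mathbb X}$. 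It therefore suffices to approximate the projection $\pi$ uniformly by homeomorphisms $g_j \colon \overline{\mathbb X}\onto \overline{\mathbb X}/\mathcal G$, since then $\bar h\circ g_j$ are homeomorphisms $\overline{\mathbb X}\onto\overline{\mathbb Y}$ converging uniformly to $h$.

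The crux is thus the \emph{shrinkability} of $\mathcal G$. I would invoke the decomposition theory of surfaces: by R.~L.~Moore's theorem a monotone, non-separating, upper semicontinuous decomposition of the $2$-sphere has quotient a $2$-sphere and is shrinkable in the sense of Bing, so the projection is a near-homeomorphism. To place $\overline{\mathbb X},\overline{\mathbb Y}$ in this setting I first pass to the closed reference surfaces $\mathscr X,\mathscr Y$, doubling across the boundary (or filling in the $\ell$ removed disks), reduce by the classification of surfaces, and realize the fibers as cellular sets shrinkable by ambient homeomorphisms supported near each fiber. Concretely, for a prescribed $\varepsilon$ one covers $\overline{\mathbb Y}$ by finitely many small cells, pulls them back to a finite family of cellular neighborhoods of the fibers, and composes finitely many shrinking homeomorphisms — each compressing one neighborhood to a set of $h$-image diameter $<\varepsilon$ while fixing everything outside it — to obtain $g_j$ with $\dist(\bar h\, g_j,\,h)<\varepsilon$.

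The main obstacle is exactly this shrinking construction: arranging the finitely many ambient homeomorphisms to have essentially disjoint supports, to remain jointly injective, and to move every point by a uniformly small $h$-amount, and in addition handling the fibers that meet $\partial\mathbb X$, where the shrinking must respect the boundary (this is also where the boundary map $h\colon\partial\mathbb X\onto\partial\mathbb Y$ gets pinned down). Everything else — upper semicontinuity of $\mathcal G$, the factorization through $\bar h$, and the manifold structure of the quotient — is routine, using only elementary topology together with Whyburn's fact, quoted above, that $h$ pulls connected sets back to connected sets. I expect to cite Moore's theorem and its bordered-surface refinement as a black box for shrinkability, so that the sole geometric point to be verified by hand is that the fibers are non-separating continua.
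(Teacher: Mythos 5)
Your forward direction is correct and complete: the separation argument with $U_1,U_2$, the compactum $C=\overline{\mathbb X}\setminus(U_1\cup U_2)$, and a connected neighborhood $V\subset B(b,\varepsilon/2)$ is the standard proof, and the boundary assertion does follow because homeomorphisms of bordered surfaces carry boundary onto boundary. For calibration, note that the paper itself offers no proof of this statement: it is quoted from Youngs \cite{Yo1} as a black box, both here and in its restatement as Theorem \ref{HomeomorphicApproximation}, and is then used as a tool (e.g.\ in Lemma \ref{Precells}). So the substance of your proposal is entirely in the converse direction, and there it has two genuine gaps.

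First, a circularity. The statement you plan to cite as a black box --- that a monotone upper semicontinuous decomposition of a compact, possibly bordered, $2$-manifold whose quotient is a manifold of the same type is \emph{shrinkable}, i.e.\ that the projection $\pi$ is a near-homeomorphism --- \emph{is} Youngs' theorem in its modern decomposition-theoretic formulation. Moore's theorem proper only identifies the quotient $S^2/\mathcal G$ with $S^2$; the approximation of $\pi$ by homeomorphisms is exactly the additional content that Youngs (and later Bing's shrinking criterion) provide. Reducing the theorem to ``Moore plus its bordered-surface refinement for shrinkability'' therefore reduces it to itself, unless you actually carry out the shrinking construction sketched in your last paragraph --- which is precisely the part you defer. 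Second, you misidentify the geometric point to be verified: ``non-separating'' and ``cellular'' coincide on $S^2$, but not on surfaces of positive genus or with boundary, and the theorem is stated for Jordan domains of \emph{arbitrary} topological type. An essential simple closed curve on a torus is a non-separating continuum that is not cellular (it lies in no disk), and collapsing it is a monotone map whose quotient, a pinched torus, is not a manifold. What forces the fibers of $h$ to be cellular is not non-separation but the hypothesis that the quotient $\overline{\mathbb X}/\mathcal G\cong\overline{\mathbb Y}$ is a surface of the same type as $\overline{\mathbb X}$; extracting cellularity from this (via Euler characteristic, covering-space, or preimage-of-small-disks arguments) is a real step your sketch omits. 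Relatedly, the boundary issues you flag --- why $h(\partial\mathbb X)=\partial\mathbb Y$, why fibers over interior points of $\mathbb Y$ avoid $\partial\mathbb X$, and how to shrink fibers meeting $\partial\mathbb X$ relative to the boundary --- are not side remarks: they need invariance-of-boundary arguments on the quotient and a relative shrinking construction, and nothing in the proposal resolves them.
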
$\,$\\

\textbf{Sobolev Variant of Youngs' Approximation Theorem.}
The main result:
\begin{theorem}\label{maintheorem} Let $\,\mathbb X \subset \mathscr X\,$ and $\, \mathbb Y \subset \mathscr Y\,$  be Jordan domains of the same topological type, $\,\mathbb Y\,$ being Lipschitz.
For every monotone map  $\,h : \overline{\mathbb X} \onto \overline{\mathbb Y}\,$ in the Sobolev space $\,\mathscr W^{1,p}(\mathbb X, \mathbb Y)\, , 1< p< \infty\,$, there  exists a sequence of monotone mappings $\,h_j : \overline{\mathbb X} \onto \overline{\mathbb Y}\,$ such that:

\begin{itemize}
\item [(i)] $\,h_j :  \mathbb X \onto \mathbb Y\,$\; are homeomorphisms
\item [(ii)] $\,h_j \rightrightarrows h \,$ uniformly  on $\,\overline{\mathbb X}\,$
\item [(iii)] $\,h_j \rightarrow h \,\,$ strongly in  $\,\mathscr W^{1,p}(\mathbb X, \mathbb Y)\,$
\item [(iv)] $\,h_j = h \,: \partial \mathbb X \onto \partial \mathbb Y\, , \;for \;j = 1, 2, ...\;$
\end{itemize}
\end{theorem}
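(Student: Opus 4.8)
The plan is to reduce to a model situation, replace $h$ by finitely degenerate piecewise affine monotone maps, and then resolve the non-injectivity by a local surgery whose energy cost is negligible. First I would normalize the target: since $\Y$ is a Lipschitz Jordan domain, it is bi-Lipschitz equivalent to a smooth model domain (a disk with $\ell$ round holes), and bi-Lipschitz changes of variable preserve membership in $\W^{1,p}$, strong $\W^{1,p}$-convergence, uniform convergence, monotonicity, and the homeomorphism property. I may thus assume $\X$ and $\Y$ are smooth model domains with energy computed, up to bounded factors, in the flat metric. By Youngs' Theorem $h$ is already a uniform limit of homeomorphisms and $h\colon\partial\X\onto\partial\Y$ is monotone; the genuinely new content is to promote uniform approximation to strong $\W^{1,p}$-approximation while preserving the boundary trace $h|_{\partial\X}$ exactly, as demanded by (iv). Once strong \emph{homeomorphic} approximation is achieved, the passage to $\mathscr{C}^{\infty}$-diffeomorphisms and to piecewise affine homeomorphisms follows from the diffeomorphic approximation theory for planar Sobolev homeomorphisms.

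The conceptual core is the following structural dichotomy. Being a monotone limit of orientation-preserving homeomorphisms, $h$ has nonnegative Jacobian determinant $J_h\ge 0$ almost everywhere, so $h$ exhibits \emph{pinches} but never \emph{folds}. Consequently each fiber $h^{-1}\{y\}$ is a continuum that is either a null set (a point or an arc), contributing nothing to $\int_{\X}|Dh|^p$, or a set of positive measure on which $h$ is constant and hence $Dh=0$ almost everywhere. In either case the degeneracy set of $h$ carries no $\W^{1,p}$-energy. This is exactly what distinguishes the present setting from an impossible one: a genuine fold would force any injective approximant to un-squeeze a two-dimensional set onto a lower-dimensional image with blowing-up gradient, whereas a pinch can be opened by rerouting the map inside a shrinking neighborhood without increasing its dominant gradient. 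The one-dimensional shadow of this is the tilt $\phi_{\varepsilon}=(1-\varepsilon)\phi+\varepsilon\,\mathrm{id}$, which turns a monotone $W^{1,p}$-surjection into a homeomorphism fixing the endpoints and converging in $W^{1,p}$; in the plane convex combinations destroy injectivity, so the tilt must be performed locally along the fibers.

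To make the globalization tractable I would first approximate $h$ strongly in $\W^{1,p}$ by monotone piecewise affine maps $g_{k}$ that agree with $h$ on $\partial\X$; the discretization is carried out on successively finer triangulations adapted to $h$, and the error is controlled by the absolute continuity of $\int_{\X}|Dh|^p$ together with the uniform continuity of $h$. Each $g_{k}$ is non-flipping, so its only degeneracies are the finitely many simplices it collapses onto segments or points. I would then open these finitely many pinches by small simplicial perturbations, supported in shrinking neighborhoods of the collapsed simplices and compactly inside $\X$ so that the boundary trace is untouched, producing a piecewise affine homeomorphism $h_{k}$ with $\int_{\X}|Dh_{k}|^p=\int_{\X}|Dg_{k}|^p+o(1)$. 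Since $h_{k}\rightrightarrows h$ gives $Dh_{k}\rightharpoonup Dh$ in $L^p$, and the energies converge, uniform convexity of $L^p$ for $1<p<\infty$ upgrades this to strong $\W^{1,p}$-convergence; a diagonal choice then yields the desired sequence.

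I expect the principal obstacle to be the second half of the last step: restoring global injectivity of a non-flipping simplicial map by an arbitrarily small perturbation, \emph{with} a quantitative guarantee that the added energy tends to zero. Locally, opening one collapsed simplex is elementary, but the collapsed simplices may share vertices and accumulate against the boundary, and a careless perturbation that cures one overlap can create another. The monotone (fiber-connected) structure and planarity are what make a coherent, orientation-consistent opening possible; verifying simultaneously that the assembled map is injective, surjective onto $\Y$, faithful to the arc-collapsing pattern of $h|_{\partial\X}$, and energy-controlled is the heart of the argument.
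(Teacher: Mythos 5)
Your scheme has two gaps, each of which is at least as hard as the theorem itself, and the first step is actually impossible as stated. A piecewise affine map restricted to $\partial\mathbb X$ is piecewise affine, so your approximants $g_k$ cannot ``agree with $h$ on $\partial\mathbb X$'' unless the boundary trace $h|_{\partial\mathbb X}$ happens to be piecewise affine --- yet exact boundary agreement is what conclusion (iv) demands, so this cannot be waved away. Even in the interior, affine interpolation on a fine triangulation preserves neither monotonicity nor nonnegativity of the Jacobian: interpolating even a homeomorphism can flip triangles, and strong $\mathscr W^{1,p}$-convergence of interpolants of a map that is merely continuous and Sobolev is itself delicate. Piecewise affine (or diffeomorphic) approximation is a deep theorem already for planar Sobolev \emph{homeomorphisms} (cf.\ \cite{IKO2}); for monotone maps you are assuming a strictly stronger statement as your starting point. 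Your structural dichotomy is also false: for the squeezing map $h(z)=z/|z|$ on an annular collar (exactly the behavior of the critical Nitsche map in Section \ref{Application}), every fiber is a radial segment of measure zero, yet $|Dh|\geqslant 1$ on the whole collar, so the degeneracy set carries positive energy. Pinches are not energy-free, and nothing in your argument controls the energy of what must be rerouted.

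The second gap is the one you yourself flag: restoring global injectivity of the non-flipping simplicial map by perturbations of $o(1)$ energy cost is ``the heart of the argument,'' and it is precisely the part you do not prove. Collapsed simplices of a monotone map can form large clusters (a pre-cell may have large diameter even when its image is tiny), can accumulate at $\partial\mathbb X$, and local fixes interact; no mechanism is offered that produces a globally injective, orientation-consistent reassembly with negligible added energy. The paper avoids any such quantitative surgery by a different device: inside each pre-cell it first performs a purely topological homeomorphic replacement (Rad\'o's theorem on monotone maps of $2$-spheres, Lemma \ref{Homeomorphic replacement}), and then replaces the map by the coordinate-wise $p$-harmonic extension of its boundary values there. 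Because the small target cells are chosen convex, the $p$-harmonic Rad\'o--Kneser--Choquet analogue (Lemma \ref{pHarmonicReplacement}) yields injectivity for free, and the Dirichlet principle (Lemma \ref{DirichletProblem}) yields the energy inequality $\mathscr E[f_{i+1}]\leqslant\mathscr E[f_i]$ for free; strong convergence then follows from lower semicontinuity and uniform convexity, which is the one part of your plan that matches the paper. A final caution: bi-Lipschitz changes of variable induce \emph{bounded} composition operators on $\mathscr W^{1,p}$, but their \emph{continuity} is not automatic \cite{Ha1}; the paper must use specially constructed bi-Lipschitz maps for which continuity holds, so even your normalization step needs an argument.
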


Let  $\mathscr M_p\,(\overline{\mathbb X} , \overline{\mathbb Y})\,$ denote the class of orientation preserving monotone mappings $\,f : \overline{\mathbb X} \onto \overline{\mathbb Y}\,$ of finite  $\,p\,$-harmonic energy\,,$\,1 < p < \infty\,$.  Theorem \ref{maintheorem} implies (by \textit{direct method})  that there always exists $\, h \in \mathscr M_p\,(\overline{\mathbb X} , \overline{\mathbb Y})\,$ with smallest $\,p\,$-harmonic energy. More importantly, the energy of $\,h\,$ equals precisely the infimum of the energy among homeomorphisms:

\begin{equation}\nonumber
 \int_\mathbb X |Dh(x)|^p\,\textnormal d x  = \min_{ f \in \mathscr M_p(\overline{\mathbb X} , \overline{\mathbb Y}) } \int_\mathbb X |Df(x)|^p\,\textnormal d x = \inf_{ f \in \mathscr H_p(\mathbb X , \mathbb Y) } \int_\mathbb X |Df(x)|^p\,\textnormal d x = \mathscr E_p\,( \mathbb X , \mathbb Y)
\end{equation}
see Section \ref {Application} for a definition of the $\,p\,$-harmonic integrals on surfaces.
In other words, no \textit{Lavrentiev Phenomenon} occurs in the class of monotone Sobolev mappings.

\begin{remark}
It is worth noting that, for an arbitrary pair $\,(\mathbb X , \mathbb Y)\,$  of topologically equivalent planar domains, diffeomorphisms are dense in  $\,\mathscr H_p(\mathbb X , \mathbb Y)\,$, see \cite{IKO2}. Thus one can take for $\,h_j\,$ in Theorem \ref{maintheorem} a sequence of $\,\mathscr C^\infty\,$-smooth diffeomorphisms. Furthermore in case $\,p = 2\,$, if
one is willing to sacrifice the boundary condition (iv)\, then the diffeomorphisms  $\,h_j :  \mathbb X \onto \mathbb Y\,$ can be chosen to be homeomorphisms up to the boundary, again denoted by $\,h_j :  \overline{\mathbb X} \onto \overline{\mathbb Y}\,$, see \cite{IKO3}.
\end{remark}
\begin{remark} It is not difficult to see that, in case of planar Lipshitz domains, a sequence of homeomorphisms $\,h_j :  \mathbb X \onto \mathbb Y\,$ which converges weakly in  $\,\mathscr W^{1,p}(\mathbb X, \mathbb Y)\, $, $\,p \geqslant 2\,$, actually converges uniformly to a monotone mapping $\,h :  \overline{\mathbb X} \onto \overline{\mathbb Y}\,$. Thus, in particular,  Theorem \ref{maintheorem} implies that $\,h\,$ can be realized as $\,\mathscr W^{1,p}$-strong limit of homeomorphisms. We therefore recover a result in ~\cite{IOw=s}; accordingly, weak sequential closure and strong closure of  $\,\mathscr H_p(\mathbb X, \mathbb Y)\, $, $\,p \geqslant 2\,$,  are the same.
\end{remark}

\section{Topological Preliminaries}
This section is intended as a gentle introduction to underlying 
 geometric analysis on planar domains and surfaces.
   When discussing the Sobolev class $\,\mathscr W^{1,p}(\mathbb X, \mathbb Y)\,$, it is particularly  convenient to embed the reference manifolds $\,\mathscr X \,$ and $\, \mathscr Y\,$ into an Euclidean space. We may, for example, use  the Nash-Kuiper embedding theorem~\cite{Ku, Na} which assures that the oriented 2-manifolds are isometrically $\,\mathscr C^1$ -embeddable in $\,\mathbb R^3\,$.  Thus we may (and do) assume that
$$\,\mathscr X , \mathscr Y \subset \mathbb R^3\,\;;\;\;\;\textnormal{inclusion being a  $\,\mathscr C^1\,$- embedding} .$$
Note that for the purpose of defining $\,\mathscr W^{1,p}(\mathbb X, \mathbb Y)\,$ one needs only assume uniform upper and lower bounds on the metric tensors.\\
When interpreting the conclusions, one might view the surfaces $\,\mathbb X\,$ and $\, \mathbb Y\,$ as thin films in $\,\mathbb R^3\,$, or flat plates in case $\,\mathscr X =\mathscr Y = \widehat{\mathbb R^2} \simeq \mathbb S^2 \subset \mathbb R^3\,$.\\
The components of $\,\mathscr X\setminus\mathbb X\,$ and $\,\mathscr Y \setminus \mathbb Y\,$ are topological disks, say
$$\,\mathscr X\setminus\mathbb X\, = \mathbb X_1 \cup \mathbb X_2 \cup ... \cup \mathbb X_\ell \;\;\textnormal{and}\;\;\; \mathscr Y\setminus\mathbb Y\, = \mathbb Y_1 \cup \mathbb Y_2 \cup ... \cup \mathbb Y_\ell $$
 Their boundaries, denoted by   $\,\partial \mathbb X_\nu \bydef \mathfrak X_\nu\,$ and $\,\partial \mathbb Y_\nu \bydef \Upsilon_\nu\,$,  are exactly the components of $\,\partial \mathbb X\,$ and $\,\partial \mathbb Y\,$,
$$
 \;\partial \mathbb X =   \mathfrak X_1 \cup  ... \cup  \mathfrak X_\ell \;\;\textnormal{and} \;\; \; \partial \mathbb Y =   \Upsilon_1 \cup  ... \cup  \Upsilon_\ell\; .
$$
 By convention,  $\, \mathbb X\,$ and  $\,\mathbb Y\,$ have no boundary if $\,\ell = 0\,$, in which case $\,\mathbb X = \mathscr X\,$ and $\,\mathbb Y = \mathscr Y\,$.

\subsection*{Continuous Functions and Homeomorphisms} We shall work with various function spaces defined on subsets of the reference manifold $\,\mathscr X\,$:
\begin{itemize}
\item  For a compact subset $\,\mathbb A \subset \mathscr X\,$  we denote by $\,\mathscr C (\mathbb A)\,$  the space of continuous functions $\,h \colon \mathbb A \to \R^3\,$ furnished with the norm:
\[\norm{\,h\,}_{\mathscr C (\mathbb A)}  = \max_{x \in \mathbb A} \,\abs{h(x)}\]
The notation  $\,h \in \mathscr C (\mathbb A, \mathbb B)\,$ will be used if we want to make explicit
the range of $\,h : \mathbb A \rightarrow \mathbb B \subset \mathbb R^3\,$.\\
\item $\mathscr H (\X , \Y)\,$  consists of orientation preserving homeomorphism $h \colon \X \onto \Y$.\\
Every homeomorphism $\,h : \mathbb X \onto \mathbb Y\,$ gives rise to a one-to-one correspondence between boundary components  $\,\mathfrak X_1 ,\, \mathfrak X_2 , ...\,, \mathfrak X_\ell\,$  and $\,\Upsilon_1 ,\, \Upsilon_2 , ...\,, \Upsilon_\ell\,$ by means of cluster limits. We conveniently rearrange the indices so that the boundary correspondence reads as follows:
\begin{equation}
h : \mathfrak X_\nu \rightsquigarrow \Upsilon_\nu\;\,,\;\;\;\; \textnormal{for}\;\; \nu = 1,... , \ell\;;
\end{equation}
This arrangement will tacitly be assumed throughout this paper for homeomorphisms in the class $\,\mathscr H (\X , \Y)\,$ and their uniform limits.\\
\item $\,\mathscr H (\widehat{\X},\widehat{\Y}) \,$ consists of homeomorphisms $\,h \colon \X \onto \Y\,$ which extend continuously to the closure of $\,\mathbb X\,$. Note that the continuous extensions become monotone maps from $\,\overline{\mathbb X}\,$ onto $\,\overline{\mathbb Y}\,$, still denoted by $\,h \colon \overline{\X} \onto \overline{\Y}\,$. They take $\partial \overline{\X}$ onto $\partial \overline{\Y}\,$; specifically, $\,h(\mathfrak X_\nu) = \Upsilon_\nu\,$ for $\,  \nu = 1, ... , \ell\,$. We refer to $\,h \colon \mathfrak X_\nu \onto \Upsilon_\nu\,, \; \nu = 1, ... , \ell\, $,  as the boundary maps. Each of these boundary maps is monotone.\\
\item $\mathscr H (\overline{\X} ,  \overline{\Y})  \,$ is the space of homeomorphisms $\,h \colon \overline{\X} \onto \overline{\Y}$.
\end{itemize}\vskip0.1cm
 Similar notation, with the obvious analogous meaning, will be used for spaces of mappings defined on other  subsets of $\, \mathscr X\,$.

\subsection*{Monotone Mappings Versus Uniform Limits of Homeomorphisms} Let $\,\A \subset \mathscr X\,$ and $\,\B \subset \mathscr Y\,$ be compact.  The class\\
 $$\mathscr M (\A, \B) \subset \mathscr C (\A, \B)\,\; \textnormal{stands for the space of monotone mappings}\, \,h \colon \A \onto \B\,.$$\\

Obviously $\,\mathscr H(\mathbb A , \mathbb B) \varsubsetneq \mathscr M(\mathbb A , \mathbb B)\,$. We now invoke the Approximation Theorem of J. W. T. Youngs~\cite{Yo1}.
\begin{theorem}\label{HomeomorphicApproximation}
Let $\,\overline{\X}\,$ and $\,\overline{\Y} \,$ be topologically equivalent compact 2-manifolds.

Then for every monotone map $\,h \colon \overline{\X} \onto \overline{\Y}\,$ there exists a sequence of homeomorphisms $\,h_j \colon \overline{\X} \onto \overline{\Y}\,$ converging uniformly to $\,h$. In symbols,
$$\,{\mathscr M} (\overline{\X}, \overline{\Y}) = \overline{\mathscr H (\overline{\X}, \overline{\Y})} \,$$
\end{theorem}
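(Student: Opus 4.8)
The plan is to prove the two inclusions behind the asserted identity $\,{\mathscr M}(\overline{\X},\overline{\Y}) = \overline{\mathscr H(\overline{\X},\overline{\Y})}\,$. The inclusion $\,\overline{\mathscr H}\subseteq\mathscr M\,$ is elementary: if a fiber $\,h^{-1}\{b\}\,$ of a uniform limit $\,h=\lim h_j\,$ of homeomorphisms were split by disjoint open sets $\,U,V\,$ into two nonempty closed pieces, then for every small $\,\delta\,$ and all large $\,j\,$ the connected set $\,h_j^{-1}(\overline{B(b,\delta)})\,$ would lie inside $\,U\cup V\,$ (uniform convergence keeps $\,h_j(\partial U\cup\partial V)\,$ away from $\,b\,$) while meeting both $\,U\,$ and $\,V\,$, which is impossible. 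The whole content is therefore the forward inclusion $\,\mathscr M\subseteq\overline{\mathscr H}\,$, and I would fix $\,h\in\mathscr M(\overline{\X},\overline{\Y})\,$ and $\,\epsilon>0\,$ and seek a homeomorphism $\,g\colon\overline{\X}\onto\overline{\Y}\,$ with $\,\norm{g-h}_{\mathscr C(\overline{\X})}<\epsilon\,$.

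The decisive structural fact comes from monotonicity through the Whyburn property of Definition~\ref{DefMonotonicity}: since $\,\overline{\Y}\setminus\{b\}\,$ is connected, so is its preimage $\,\overline{\X}\setminus h^{-1}\{b\}\,$, i.e.\ every fiber is a \emph{non-separating} continuum. The fibers form an upper semicontinuous decomposition $\,G\,$ of $\,\overline{\X}\,$ whose quotient map is, after the canonical identification $\,\overline{\X}/G\cong\overline{\Y}\,$, exactly $\,h\,$. On the sphere (hence for the planar multiply connected domains that are the main interest) Moore's decomposition theorem applies verbatim: an upper semicontinuous decomposition of $\,\mathbb S^2\,$ into non-separating continua has quotient a $\,2\,$-sphere and is \emph{shrinkable}, meaning the quotient map is a uniform limit of homeomorphisms. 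For a surface of higher genus one passes to disk charts, or to the sphere by filling the removed disks, and applies the same shrinking locally. This already yields $\,h\in\overline{\mathscr H}\,$ and thereby the stated theorem.

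A more hands-on version of the same construction is worth carrying out, because it also delivers the $\,\mathscr C\,$-norm control and the exact boundary matching that the Sobolev refinement will later need. I would localize on the target: choose a triangulation $\,\tau\,$ of $\,\overline{\Y}\,$ of mesh $\,<\epsilon\,$, compatible with the boundary decomposition $\,\partial\Y=\Upsilon_1\cup\cdots\cup\Upsilon_\ell\,$, and pull it back by $\,h\,$ to a generalized cell decomposition of $\,\overline{\X}\,$. Building $\,g\,$ skeleton by skeleton, I would first shrink each vertex-fiber to a point and each edge-preimage to an arc by homeomorphisms of $\,\overline{\X}\,$ supported in arbitrarily small neighborhoods, obtaining a map that is one-to-one over the $\,1\,$-skeleton; over each closed face the leftover is the disk model problem — a monotone map onto a triangle that is already a homeomorphism on its boundary, which is filled by a boundary-preserving homeomorphism. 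Because every point then moves inside a single cell of diameter $\,<\epsilon\,$, the assembled $\,g\,$ satisfies $\,\norm{g-h}<C\epsilon\,$, and on $\,\partial\X\,$ one keeps $\,g=h\,$ verbatim. I expect the main obstacle to be precisely this shrinking step: collapsing a single non-separating fiber to a point by a homeomorphism supported in a prescribed small neighborhood — a Bing--Moore shrinking argument resting on the non-separation property above together with the Schoenflies theorem to realize the thin neighborhoods as round disks — and then carrying it out coherently across shared edges and vertices while leaving $\,h|_{\partial\X}\,$ untouched, so that the local homeomorphisms assemble into a single global one.
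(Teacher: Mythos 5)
First, a point of reference: the paper does not prove this statement at all --- it is Youngs' 1948 theorem, invoked verbatim from \cite{Yo1} and used as a black box --- so your attempt has to stand entirely on its own. Judged that way, your easy inclusion (uniform limits of homeomorphisms are monotone) is correct, and for \emph{closed genus-zero} surfaces your main line of attack is the standard one: fibers are non-separating continua by the Whyburn property, the fiber decomposition is upper semicontinuous, and Moore's decomposition theorem together with Bing shrinking exhibits the quotient map as a near-homeomorphism. The first genuine gap is the case of genus $\ge 1$. There, ``non-separating continuum'' is strictly weaker than the cellularity that Moore-type shrinking requires: a meridian circle $C$ on the torus is a non-separating continuum, and the decomposition whose only nondegenerate element is $C$ is upper semicontinuous, yet its quotient is a pinched torus --- not a manifold --- so that decomposition is not shrinkable. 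Nothing in your argument prevents a fiber of $h$ from being such an essential continuum; excluding this must use the hypothesis that $\overline{\Y}$ is homeomorphic to $\overline{\X}$ (equal genus), a hypothesis your sketch never invokes. That exclusion is the real content of the theorem in positive genus, and ``pass to disk charts, or fill the removed disks and shrink locally'' cannot replace it, because a priori a fiber need not lie in any disk chart.

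The second gap concerns manifolds with boundary ($\ell \ge 1$), which is exactly the case the paper needs for its Jordan domains. If you fill the holes, extend $h$, and shrink on the resulting closed surfaces, you obtain homeomorphisms of the \emph{closed} surfaces uniformly close to the extension of $h$; nothing makes them carry $\overline{\X}$ onto $\overline{\Y}$, so a relative, boundary-respecting shrinking argument is required and is missing. Worse, your hands-on variant asserts that the assembled homeomorphism $g \colon \overline{\X} \onto \overline{\Y}$ can keep $g = h$ ``verbatim'' on $\partial \X$. That is impossible whenever $h$ is not injective on $\partial\X$: a homeomorphism of the closures restricts to a homeomorphism $\partial\X \onto \partial\Y$, whereas $h|_{\partial\X}$ is merely monotone. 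The paper itself records precisely this trade-off in the Remark following Lemma~\ref{HomeomorphicExtensionInsideCells}: one can arrange $h_j = h$ on $\partial\X$ only at the price of the $h_j$ being homeomorphisms of the open domains $\X \onto \Y$, not of their closures. (A smaller but symptomatic slip: the shrinking homeomorphisms are not ``supported in arbitrarily small neighborhoods'' --- a single fiber can be enormous; they are supported in saturated neighborhoods of the fiber, and the relevant smallness is measured in the target, through $h$, not in $\overline{\X}$.)
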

This theorem will provide us with a powerful tool when dealing with monotone mappings. We shall appel to it repeatedly to either recover or refine the well known properties of monotone mappings betwen  Jordan domains $\,\mathbb X \subset \mathscr X\,$ and $\,\mathbb Y\subset \mathscr Y \,$.

 For example, Theorem \ref{HomeomorphicApproximation}  readily   implies that
  \begin{lemma}

We have the inclusions
$$\mathscr H (\overline{\X} ,  \overline{\Y}) \; \varsubsetneq\; \mathscr H (\widehat{\X} ,\widehat{ \Y})\; \varsubsetneq \;\mathscr M (\overline{\X} ,  \overline{\Y})\; = \; \overline{\mathscr H (\overline{\X}, \overline{\Y})} \;\varsubsetneq\; \mathscr C(\overline{\X} ,  \overline{\Y})\,$$
Moreover, for $\, h \in \mathscr M (\overline{\X}, \overline{\Y})\,$,  each boundary map $\,h : \partial \mathfrak X_\nu  \onto \partial \Upsilon_\nu\,$ is monotone.
\end{lemma}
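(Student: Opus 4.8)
The plan is to read the displayed chain from the inside out, treating the central equality as the only substantive input and the surrounding inclusions as essentially formal. The middle equality $\mathscr M(\overline{\X},\overline{\Y}) = \overline{\mathscr H(\overline{\X},\overline{\Y})}$ is nothing but Theorem \ref{HomeomorphicApproximation} (Youngs) applied to the topologically equivalent compact $2$-manifolds-with-boundary $\overline{\X}$ and $\overline{\Y}$, so nothing remains to be proved there. For the rightmost inclusion, every monotone map is continuous by Definition \ref{DefMonotonicity}, giving $\mathscr M(\overline{\X},\overline{\Y}) \subseteq \mathscr C(\overline{\X},\overline{\Y})$; strictness is witnessed by any non-surjective continuous map (a constant, say), which fails the onto requirement and hence lies outside $\mathscr M$.

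For the two leftmost inclusions I would argue as follows. A homeomorphism $h \colon \overline{\X} \onto \overline{\Y}$ of the closures restricts to a homeomorphism $\X \onto \Y$ of the interiors and is trivially its own continuous extension, whence $\mathscr H(\overline{\X},\overline{\Y}) \subseteq \mathscr H(\widehat{\X},\widehat{\Y})$. Strictness (when $\partial\X\neq\varnothing$) comes from a map whose interior restriction is a homeomorphism but whose continuous boundary extension is only monotone and not injective; the standard local model is a conformal map of a collar onto a slit domain, whose boundary values traverse the slit twice. The inclusion $\mathscr H(\widehat{\X},\widehat{\Y}) \subseteq \mathscr M(\overline{\X},\overline{\Y})$ is immediate from the definition of $\mathscr H(\widehat{\X},\widehat{\Y})$, whose elements extend to monotone surjections of the closures; strictness is witnessed by a map that collapses a closed subdisk of $\X$ to an interior point of $\Y$ while expanding the complementary annulus onto the rest of $\Y$, a monotone surjection that is not injective in the interior.

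The one genuinely non-formal assertion is the boundary statement, and here I would lean on the approximation again. Given $h \in \mathscr M(\overline{\X},\overline{\Y})$, Theorem \ref{HomeomorphicApproximation} supplies homeomorphisms $h_j \colon \overline{\X} \onto \overline{\Y}$ with $h_j \to h$ uniformly. Each $h_j$ carries $\partial \X$ onto $\partial \Y$, so for every $x \in \mathfrak X_\nu$ the value $h(x) = \lim_j h_j(x)$ lies in the closed set $\partial \Y$; since $\mathfrak X_\nu$ is connected, $h(\mathfrak X_\nu)$ lies in a single component, say $\Upsilon_{\nu'}$. Because the components of $\partial \Y$ are separated by a positive distance $\min_{\mu\neq\mu'}\dist(\Upsilon_\mu,\Upsilon_{\mu'})>0$, uniform convergence forces the boundary correspondence $h_j(\mathfrak X_\nu)=\Upsilon_{\sigma_j(\nu)}$ to stabilize, i.e.\ $\sigma_j(\nu)=\nu'$ for all large $j$; after the agreed relabeling we may take $\nu'=\nu$. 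Thus $h|_{\mathfrak X_\nu} \colon \mathfrak X_\nu \to \Upsilon_\nu$ is exhibited as a uniform limit of homeomorphisms of circles.

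It then remains to invoke the one-dimensional case of Youngs' theorem: a uniform limit of homeomorphisms $S^1 \onto S^1$ is automatically a monotone surjection. Surjectivity is a short compactness argument (were a point $b$ omitted, choose $x_j$ with $h_j(x_j)=b$ and pass to a convergent subsequence $x_j\to x_*$ to get $h(x_*)=b$), while connectedness of the fibres is the real topological content and follows exactly as in the planar statement. This yields monotonicity of each boundary map $h \colon \mathfrak X_\nu \onto \Upsilon_\nu$. I expect the main obstacle to be precisely the bookkeeping of this last paragraph: confining $h(\mathfrak X_\nu)$ to $\partial\Y$ and pinning down the target component $\Upsilon_\nu$ before the one-dimensional monotonicity can be applied. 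Everything upstream is either definitional or a direct citation of the approximation theorem.
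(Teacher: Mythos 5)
Your overall route---deriving every assertion from Youngs' approximation theorem---is exactly the paper's intended one: the paper gives no separate proof, saying only that the lemma follows readily from Theorem \ref{HomeomorphicApproximation}. Your handling of the middle equality, of the right-hand strict inclusion, of the second strict inclusion (collapsing a closed subdisk to an interior point), and of the boundary statement (confinement of $h(\mathfrak X_\nu)$ to $\partial \Y$, stabilization of the component correspondence by the positive separation of the $\Upsilon_\mu$, surjectivity by compactness, and the one-dimensional monotone-limit fact for circle maps) is sound, and you correctly flag that the first inclusion is strict only when $\partial \X \neq \varnothing$.

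There is, however, one step that fails: your witness for $\mathscr H(\overline{\X},\overline{\Y}) \varsubsetneq \mathscr H(\widehat{\X},\widehat{\Y})$. A conformal map of $\X$ (or of a collar) onto a slit domain is not an element of $\mathscr H(\widehat{\X},\widehat{\Y})$ at all: membership requires the interior restriction to be a homeomorphism \emph{onto} $\Y$, whereas the slit-domain map has image a proper subdomain of $\Y$. Worse, the boundary behavior you invoke---traversing a slit twice---can never occur for maps in this class: as the paper notes when defining $\mathscr H(\widehat{\X},\widehat{\Y})$, the continuous extensions of such homeomorphisms are \emph{monotone} on $\overline{\X}$, and an arc traversed twice has two-point, hence disconnected, fibers. (Indeed, the extension of the Riemann map of the slit disk is not monotone, for precisely this reason.) So the local model you cite exhibits the wrong phenomenon; boundary extensions of homeomorphisms can only collapse arcs, never fold or double-cover. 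The correct witness collapses an arc: take a monotone degree-one map $\phi \colon \mathfrak X_1 \onto \Upsilon_1$ that sends a closed arc to a single point, and extend it to a map that is a homeomorphism on the interior---via the paper's Lemma \ref{HomeomorphicExtensionInsideCells} in the simply connected case, or in general by working in a collar $\mathfrak X_1 \times [0,1]$: set $h(\theta,t) = (\psi_t(\theta),t)$ with lifts $\tilde\psi_t = (1-t)\tilde\phi + t\,\mathrm{id}$ (strictly increasing for $t>0$, equal to $\tilde\phi$ at $t=0$), extend by the identity outside the collar, and compose with a fixed homeomorphism $\overline{\X} \onto \overline{\Y}$. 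The result lies in $\mathscr H(\widehat{\X},\widehat{\Y}) \setminus \mathscr H(\overline{\X},\overline{\Y})$, which is what the strictness requires.
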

We shall also take advantage of the following refinement of the \textit{Modification Theorem} in \cite{Yo1}.
\begin{lemma}[Homeomorphic Extension of a Monotone Boundary Map]\label{HomeomorphicExtensionInsideCells}
Let $\,\mathbb X_\circ \subset \mathscr X\,$ and $\,\mathbb Y_\circ \subset \mathscr Y\,$ be  simply connected Jordan domains. Then every monotone map $\,h : \partial \mathbb X_\circ \onto \partial \mathbb Y_\circ\,$ admits a continuous monotone extension $\,h : \overline{\mathbb X_\circ\!\!} \onto  \overline{\mathbb Y_\circ\!\!}\,\,$. Such an extension can be further modified to become a homeomorphism between $\,\mathbb X_\circ\,$ and $\,\mathbb Y_\circ\,$.
\end{lemma}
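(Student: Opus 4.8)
The plan is to transport the problem to the unit disk, where the target becomes convex, and then to produce the homeomorphism in one stroke by the classical harmonic (Poisson) extension. A single map will serve simultaneously as the monotone extension asserted first and as its homeomorphic modification, so both claims of the lemma fall out together.

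First I would normalize the domains. Since $\mathbb X_\circ$ and $\mathbb Y_\circ$ are simply connected Jordan domains, the Schoenflies theorem supplies homeomorphisms $\alpha \colon \overline{\mathbb X_\circ} \onto \overline{\mathbb D}$ and $\beta \colon \overline{\mathbb Y_\circ} \onto \overline{\mathbb D}$ onto the closed unit disk $\overline{\mathbb D}\subset\mathbb R^2$ carrying $\partial \mathbb X_\circ$ and $\partial \mathbb Y_\circ$ onto the unit circle $\mathbb S^1$. Because monotonicity, surjectivity, and the homeomorphism property are all preserved under pre- and post-composition with homeomorphisms, it suffices to treat the transferred boundary map
\[
\psi \;\bydef\; \beta \circ h \circ \big(\alpha|_{\partial \mathbb X_\circ}\big)^{-1} \colon \mathbb S^1 \onto \mathbb S^1 ,
\]
extend it, and pull the extension back by $H \bydef \beta^{-1}\circ\Psi\circ\alpha$.

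Next I would record that $\psi$ has degree $\pm 1$. Lifting $\psi$ to a monotone (say nondecreasing) map $\tilde\psi \colon \mathbb R \to \mathbb R$ with $\tilde\psi(t+2\pi)=\tilde\psi(t)+2\pi d$, surjectivity forces $d\neq 0$, while connectedness of the fibers $\psi^{-1}\{b\}$ rules out $|d|\geqslant 2$; hence $d=\pm 1$, and after composing with a reflection if necessary we may take $d=1$. Now let $\Psi \colon \overline{\mathbb D}\to\mathbb R^2$ be the harmonic (Poisson) extension of $\psi$. Since $\psi$ is a monotone degree-one map onto the convex curve $\mathbb S^1$, the Rad\'o--Kneser--Choquet theorem---in the form admitting weakly monotone, arc-collapsing boundary data---guarantees that $\Psi$ restricts to a (real-analytic) diffeomorphism of $\mathbb D$ onto $\mathbb D$ and extends continuously to $\psi$ on $\mathbb S^1$. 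Consequently $\Psi \colon \overline{\mathbb D}\onto\overline{\mathbb D}$ is monotone: the fiber over an interior point is a single interior point, while the fiber over $b\in\mathbb S^1$ is the connected arc $\psi^{-1}\{b\}\subset\mathbb S^1$. Pulling back, $H=\beta^{-1}\circ\Psi\circ\alpha$ is a continuous monotone map $\overline{\mathbb X_\circ}\onto\overline{\mathbb Y_\circ}$ agreeing with $h$ on $\partial\mathbb X_\circ$, whose restriction $H\colon\mathbb X_\circ\onto\mathbb Y_\circ$ is a homeomorphism.

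The crux is the invocation of Rad\'o--Kneser--Choquet for boundary data that is only \emph{monotone}, not a homeomorphism of the circle. The point to verify carefully is that the collapsing of arcs does not destroy injectivity or local invertibility in the interior: one must show that the Jacobian of the harmonic map $\Psi$ stays positive throughout $\mathbb D$ and that $\Psi$ is globally univalent there. This is precisely Choquet's argument, combining the open-mapping property of harmonic maps with the non-vanishing of the Jacobian forced by convexity of the target, and it is the convexity of $\mathbb S^1$ that makes the reduction to the disk worthwhile. A purely topological alternative, closer to Youngs' Modification Theorem, would first extend $\psi$ \emph{radially} by $re^{it}\mapsto r\,\psi(e^{it})$---which is visibly monotone---and then open up the collapsed arcs in the interior to manufacture a homeomorphism; the harmonic route simply delivers the homeomorphism directly.
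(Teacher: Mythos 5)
Your proof is correct, but it takes a genuinely different route from the paper's. The paper likewise begins by transporting both closures onto closed disks---via the uniformization theorem rather than Schoenflies, which is the more natural reference on a general surface $\mathscr X$, where the fact needed is that the closure of a simply connected Jordan domain is a closed $2$-cell---but from there it stays purely topological: the monotone circle map is extended radially, $re^{it}\mapsto r\,\psi(e^{it})$ (visibly monotone, which already settles the first assertion), and the homeomorphic modification of the interior is then obtained by quoting the Modification Theorem of Youngs \cite{Yo1}; this is exactly the ``topological alternative'' you sketch in your closing paragraph. You instead obtain both conclusions in one stroke from the Poisson extension, invoking the Rad\'o--Kneser--Choquet theorem for weakly monotone boundary data. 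That invocation is sound: Kneser's argument uses only that a line meets the convex target curve in at most two points together with connectedness of the boundary fibers, never injectivity of the boundary data, and this form of the theorem is available in the literature (see Duren \cite{Dub}); your fiber analysis of $\Psi$ on $\overline{\mathbb D}$ (singleton fibers inside, arcs $\psi^{-1}\{b\}$ on the circle, no interior point reaching the boundary circle by the maximum principle) then correctly yields monotonicity of the closed-disk map, and pulling back by the normalizing homeomorphisms preserves everything. As for what each approach buys: the paper's argument needs nothing beyond Youngs' toolkit, which is fitting for a lemma in the topological preliminaries; your harmonic route produces extra structure (a real-analytic diffeomorphism of the interiors, energy-minimal for the Dirichlet integral with the given boundary values), at the cost of front-loading the weakly monotone RKC theorem---precisely the kind of statement the paper is at pains to establish later, in Lemma~\ref{pHarmonicReplacement}, for general $p$, where the case of non-homeomorphic boundary data is not covered by the existing literature. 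Since you work at $p=2$, you are on firm classical ground.
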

\begin{proof}
 First, with the aid of the uniformization theorem, we transform $\,\overline{\mathbb X_\circ\!\!} \,$ and $\,\overline{\mathbb Y_\circ\!\!} \,$ homeomorphically onto the closed Euclidean disks.  We obtain a monotone map between circles, which  we extend (in a radial fashion) to a monotone map of the disks.  We then lift such an extension back to the reference  manifolds $\,\mathscr X\,$ and $\,\mathscr Y\,$, completing the proof of the first statement. Then, by "\textit{Modification Theorem}" in \cite{Yo1}, this extension can be modified to become a homeomorphism between $\,\mathbb X_\circ\,$ and $\,\mathbb Y_\circ\,$.
\end{proof}
 \begin{remark} In Theorem \ref{HomeomorphicApproximation}, if one is willing to forgo the univalence of the boundary mappings $\,h_j \colon \partial \X \,\onto \partial \Y\,$, but only wants them to be injective from  $\,\mathbb X \onto \mathbb Y\,$, then we can ensure that    $\, h_j = h\,$ on $\,\partial \X\,$. However, this variant of Youngs' theorem will not be exercised here.
 \end{remark}

\subsection*{Monotone Extension Outside $\,\mathbb X\,$.}
\begin{lemma}
Every $\,h \in {\mathscr M} (\overline{\X}, \overline{\Y}) \,$ extends as a continuous monotone map between reference manifolds.
\end{lemma}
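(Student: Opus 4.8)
The plan is to build the extension cell by cell over the topological disks that were removed from the reference manifolds, and then paste. Recall $\,\mathscr X = \overline{\mathbb X}\cup\mathbb X_1\cup\dots\cup\mathbb X_\ell\,$ and $\,\mathscr Y = \overline{\mathbb Y}\cup\mathbb Y_1\cup\dots\cup\mathbb Y_\ell\,$, where the closed disks $\,\overline{\mathbb X_\nu}\,$ (resp. $\,\overline{\mathbb Y_\nu}\,$) are pairwise disjoint and meet $\,\overline{\mathbb X}\,$ (resp. $\,\overline{\mathbb Y}\,$) only along the boundary circle $\,\mathfrak X_\nu = \partial\mathbb X_\nu\,$ (resp. $\,\Upsilon_\nu = \partial\mathbb Y_\nu\,$). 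As recorded in the lemma above, each boundary map $\,\beta_\nu \bydef h|_{\mathfrak X_\nu}\colon \mathfrak X_\nu\onto\Upsilon_\nu\,$ is monotone. Since $\,\mathbb X_\nu\,$ and $\,\mathbb Y_\nu\,$ are simply connected Jordan domains, Lemma~\ref{HomeomorphicExtensionInsideCells} furnishes a continuous monotone extension $\,g_\nu\colon\overline{\mathbb X_\nu}\onto\overline{\mathbb Y_\nu}\,$ of $\,\beta_\nu\,$. I then define $\,\tilde h\colon\mathscr X\to\mathscr Y\,$ by $\,\tilde h = h\,$ on $\,\overline{\mathbb X}\,$ and $\,\tilde h = g_\nu\,$ on $\,\overline{\mathbb X_\nu}\,$.

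Next I would verify that $\,\tilde h\,$ is a well-defined continuous surjection. The sets $\,\overline{\mathbb X},\overline{\mathbb X_1},\dots,\overline{\mathbb X_\ell}\,$ form a finite closed cover of $\,\mathscr X\,$; any two pieces overlap only in a shared circle $\,\mathfrak X_\nu\,$, on which both prescriptions agree with $\,\beta_\nu\,$. Hence the gluing (pasting) lemma makes $\,\tilde h\,$ continuous. Surjectivity is immediate, since $\,h\,$ covers $\,\overline{\mathbb Y}\,$ and each $\,g_\nu\,$ covers $\,\overline{\mathbb Y_\nu}\,$, so together they exhaust $\,\mathscr Y\,$.

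The heart of the matter, and the step I expect to be the only genuine obstacle, is checking that every fiber $\,\tilde h^{-1}\{y\}\,$, $\,y\in\mathscr Y\,$, is connected. I would split into three cases according to the location of $\,y\,$. If $\,y\in\mathbb Y\,$, then $\,y\,$ lies outside every range $\,\overline{\mathbb Y_\nu}\,$, so $\,\tilde h^{-1}\{y\} = h^{-1}\{y\}\,$ is connected by monotonicity of $\,h\,$; symmetrically, if $\,y\in\mathbb Y_\nu\,$ then $\,\tilde h^{-1}\{y\} = g_\nu^{-1}\{y\}\,$ is connected by monotonicity of $\,g_\nu\,$. The delicate case is $\,y\in\Upsilon_\nu\,$, the circle shared by $\,\overline{\mathbb Y}\,$ and $\,\overline{\mathbb Y_\nu}\,$. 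Here no cell $\,\overline{\mathbb X_\mu}\,$ with $\,\mu\ne\nu\,$ contributes, since its image $\,\overline{\mathbb Y_\mu}\,$ is disjoint from $\,\Upsilon_\nu\,$; thus $\,\tilde h^{-1}\{y\} = h^{-1}\{y\}\cup g_\nu^{-1}\{y\}\,$. Both sets are connected, being fibers of the monotone maps $\,h\,$ and $\,g_\nu\,$; moreover, because $\,\beta_\nu\colon\mathfrak X_\nu\onto\Upsilon_\nu\,$ is onto, the boundary fiber $\,\beta_\nu^{-1}\{y\}\,$ is nonempty and is contained in both $\,h^{-1}\{y\}\,$ and $\,g_\nu^{-1}\{y\}\,$. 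A union of two connected sets with nonempty intersection is connected, so $\,\tilde h^{-1}\{y\}\,$ is connected, completing the verification of monotonicity. The only point demanding care is thus the compatibility of the two fibers along $\,\mathfrak X_\nu\,$, which is guaranteed precisely because $\,h\,$ and $\,g_\nu\,$ restrict there to the same monotone boundary map $\,\beta_\nu\,$.
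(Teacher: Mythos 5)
Your proposal is correct and follows essentially the same route as the paper: extend over each complementary disk by applying Lemma~\ref{HomeomorphicExtensionInsideCells} to the monotone boundary map $\,h\colon \mathfrak X_\nu \onto \Upsilon_\nu\,$, then paste. The only difference is that you spell out the gluing continuity and the fiber-connectedness check (the case $\,y\in\Upsilon_\nu\,$ via the union of two connected fibers meeting in $\,\beta_\nu^{-1}\{y\}\neq\emptyset\,$), which the paper leaves as ``immediate.''
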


\begin{proof}
If $\overline{\X}$ has no boundary then so does $\overline{\Y}\,$. Hence $\overline{\X} = \mathscr X \,$ and $\,\overline{\Y} = \mathscr Y\,$. Now, consider a boundary component $\,\mathfrak X_\nu \subset \partial \X\,$ and the corresponding boundary component $\,\Upsilon_\nu \subset \partial \Y\,$. Recall that  $\,h \colon \mathfrak X_\nu \onto \Upsilon_\nu\,$ is monotone.  Both $\,\mathfrak X_\nu\,$ and $\,\Upsilon_\nu\,$ are topological circles on the reference surfaces $\,\mathscr X\,$ and $\,\mathscr Y $, respectively.
Filling these circles with topological open disks, say  $\,{\mathbb X_\nu}\subset \mathscr X \,$ and $\, {\mathbb Y_\nu}\subset \mathscr Y\,$, results in closed 2-cells in the reference manifolds. The extension is immediate from Lemma \ref{HomeomorphicExtensionInsideCells}.
\end{proof}
\begin{remark}
With little extra efforts one can ensure that such an extension is a homeomorphism between the components $\,\mathbb X_\nu\,$ and $\,\mathbb Y_\nu\,$, for all  $\nu = 1,2,..., \ell\,$. But there will be no need for this.
\end{remark}
We continue to use the same notation   $\, h : \mathscr X \onto \mathscr Y\,$ for the monotone extension. Thus  $\,h\,$ becomes an element of $\,\mathscr M(\mathscr X , \mathscr Y)\,$ as well. There will be no need of any regularity  of $\, h : \mathscr X \onto \mathscr Y\,$ outside  $\,\mathbb X\,$ .

\subsection*{Cells and Pre-cells} A cell in the reference manifold $\,\mathscr Y\,$ is any simply connected Jordan domain $\,\mathcal Q \subset \mathscr Y\,$.  Its preimage $\,h^{-1}(\mathcal Q)\subset \mathscr X \,$ under a monotone map $\,h : \mathscr X \onto  \mathscr Y\,$ is still simply connected but not necessarily a Jordan domain. We refer to $\,h^{-1}(\mathcal Q)\subset \mathscr X \,$   as \textit{pre-cell} in $\,\mathscr X\,$. Beware that not every simply connected domain in $\,\mathscr X\,$ comes  as a pre-cell for $\,h\,$;  and that, in general, the closure of a pre-cell may not contain $\,h^{-1}(\overline{\mathcal Q})\,$. In fact, the strict inclusions $\,\overline{h^{-1}(\mathcal Q)} \varsubsetneq h^{-1}(\overline{\mathcal Q})\,$ and $\,\partial h^{-1}(\mathcal Q) \varsubsetneq h^{-1}(\partial\mathcal Q)\,$ are typical of monotone mappings. Associated with $\,\mathbb X \subset \mathscr X\,,\; \mathbb Y\subset \mathscr Y\,$ and the monotone map
$\,h: \overline{\mathbb X} \onto  \overline{\mathbb Y}\,$ are the concepts of internal and boundary cells.

\begin{definition}
 The term cell in a domain $\,\mathbb Y\subset \mathscr Y \,$ refers to any simply connected Jordan domain $\,\mathcal Q \subset \mathbb Y\,$ which satisfies one of the following conditions:
\begin{itemize}
\item  $\,\mathcal Q \Subset \mathbb Y\,$, we call it  an \textit{internal cell}\,.
\item  $\,\overline{\mathcal Q} \cap \partial \mathbb Y = \overline{ \mathcal C} \,$ is a closed Jordan arc (not a single point).   We refer to such  $\,\mathcal Q\,$  as \textit{boundary cell} in $\,\mathbb Y\,$ and to $\overline{\mathcal C }$ as its  \textit{external face}. In this case we shall also make use of  the set $\,\mathcal Q_+ \bydef \mathcal Q \cup \mathcal C\,$.
    \end{itemize}
\end{definition}
Every boundary cell $\,\mathcal Q \subset \mathbb Y\,$ can be extended beyond its external face to become a cell in the reference manifold $\,\mathscr Y\,$. Let $\,\mathcal Q^*\subset \mathscr Y\,$  be such an extension so  $\,\mathcal Q = \mathcal Q^* \cap \mathbb Y\,$. Here $\,\partial \mathbb Y\,$  splits $\,\mathcal Q^*\,$  into two simply connected Jordan domains. \\

To every cell $\,\mathcal Q\,$ in $\,\mathbb Y\,$ (internal or boundary)  there corresponds a simply connected domain in $\,\mathbb X\,$, called \textit{pre-cell} in $\,\mathbb X\,$.  It is defined by the following rule.
\begin{itemize}
 \item   If $\,\mathcal Q \Subset \mathbb Y\,$, then we call $\,\mathcal U \deff h^{-1}(\mathcal Q) \Subset \mathbb X\,$ \textit{the internal pre-cell} in $\,\mathbb X\,$.
 \item   If $\,\overline{\mathcal Q} \cap \partial \mathbb Y \neq\emptyset\,$, then we call $\,\mathcal U \deff h^{-1}(\mathcal Q^*) \cap \mathbb X \,$ the \textit{boundary pre-cell} in $\,\mathbb X\,$. It is  of no importance which extension  $\,\mathcal Q^*\,$ is taken in this formula for $\mathcal U$. In fact, we always have  $\,\mathcal U =  h^{-1}(\mathcal Q_+) \cap \mathbb X \,$.
\end{itemize}
The reader is cautioned that in general $\,\mathcal U\,$ is not the preimage of the boundary cell $\,\mathcal Q \subset \mathbb Y\,$; we have only the inclusion  $\,h^{-1}(\mathcal Q) \subset  \mathcal U\,$.
Furthermore, the pre-cells in $\,\mathbb X\,$ need not be Jordan domains, and that is why  some complications (topological and analytical) are to be expected.

\begin{lemma}\label{Precells} The pre-cells (both internal and boundary) are simply connected domains in $\,\mathbb X\,$.
\end{lemma}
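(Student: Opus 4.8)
The plan is to handle both kinds of pre-cell at once by working with the monotone extension $h:\mathscr X\onto\mathscr Y$ and Youngs' Approximation Theorem \ref{HomeomorphicApproximation}. Openness is immediate from continuity: for an internal cell $\mathcal U=h^{-1}(\mathcal Q)$ and for a boundary cell $\mathcal U=h^{-1}(\mathcal Q^*)\cap\mathbb X$, the sets $\mathcal Q$ and $\mathcal Q^*$ are open in $\mathscr Y$, so $\mathcal U$ is open. Connectedness of $h^{-1}(\mathcal Q)$ and of $h^{-1}(\mathcal Q^*)$ comes for free from the Whyburn property recorded in Definition \ref{DefMonotonicity}, since $\mathcal Q$ and $\mathcal Q^*$ are connected.

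The real content is simple connectivity, and here I would avoid any ``connected complement'' criterion, since $\mathscr X$ may have positive genus (on a torus the complement of a non‑separating annulus is connected while the annulus is not simply connected). Instead I would argue by homeomorphic approximation. The extension lemma established above places $h$ in $\mathscr M(\mathscr X,\mathscr Y)$, so Theorem \ref{HomeomorphicApproximation}, applied to the topologically equivalent closed surfaces $\mathscr X$ and $\mathscr Y$, supplies homeomorphisms $h_j:\mathscr X\onto\mathscr Y$ with $h_j\rightrightarrows h$. Fix a cell $\mathcal R\subset\mathscr Y$ (either $\mathcal Q$ or the extended disk $\mathcal Q^*$) and a loop $\sigma$ in $h^{-1}(\mathcal R)$. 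Then $h(\sigma)$ is a compact subset of the topological disk $\mathcal R$, hence contained in some smaller disk $\mathcal R_1\Subset\mathcal R$; choosing disks $\mathcal R_1\Subset\mathcal R_2\Subset\mathcal R$ and using uniform convergence (take $j$ so large that $\sup_x|h_j(x)-h(x)|$ is below both $\dist(\mathcal R_1,\partial\mathcal R_2)$ and $\dist(\mathcal R_2,\partial\mathcal R)$) yields the sandwich
\[
\sigma\subset h^{-1}(\mathcal R_1)\subset h_j^{-1}(\mathcal R_2)\subset h^{-1}(\mathcal R).
\]
Since $h_j$ is a homeomorphism, $h_j^{-1}(\mathcal R_2)$ is a topological disk, so $\sigma$ contracts inside it and therefore inside $h^{-1}(\mathcal R)$. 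As $\sigma$ was arbitrary, $h^{-1}(\mathcal R)$ is simply connected. With $\mathcal R=\mathcal Q$ this disposes of the internal pre-cells completely.

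For a boundary cell it remains to pass from the disk $V\deff h^{-1}(\mathcal Q^*)$ (simply connected by the previous step) to $\mathcal U=V\cap\mathbb X$. First I would note that the monotone extension respects the cell decomposition: because $h(\overline{\mathbb X})=\overline{\mathbb Y}$ one gets $h^{-1}(\mathbb Y_\mu)=\mathbb X_\mu$ for every $\mu$, so the disk $\mathcal Q^*$, which meets $\partial\mathbb Y$ only along the single component $\Upsilon_\nu$ carrying its external face, produces a set $V$ that crosses $\partial\mathbb X$ only along $\mathfrak X_\nu$. The trace $V\cap\mathfrak X_\nu=(h|_{\mathfrak X_\nu})^{-1}(\mathcal C)$ is the preimage of the external‑face arc $\mathcal C$ under the \emph{monotone} boundary map $h:\mathfrak X_\nu\onto\Upsilon_\nu$, hence a single connected cross‑arc of the disk $V$ with endpoints on $\partial V$. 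Such a cross‑cut splits $V$ into two Jordan subdomains, and $\mathcal U=V\cap\mathbb X=h^{-1}(\mathcal Q_+)\cap\mathbb X$ is exactly the one lying on the $\mathbb X$‑side; being a side of a cross‑cut of a disk it is again a connected, simply connected domain.

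I expect the simple‑connectivity step to be the main obstacle. The difficulty is that $h$ is only monotone — non‑injective, with fibers that may run from the interior out to the boundary — so a loop cannot be contracted by any naive lifting through $h$; the sandwiching estimate is precisely what lets one borrow the disk $h_j^{-1}(\mathcal R_2)$ of a genuine homeomorphism to perform the contraction, and it is also what side‑steps the genus of $\mathscr X$. The analogous delicate point in the boundary case is verifying that $V\cap\mathfrak X_\nu$ is a \emph{single} arc rather than several components; this is exactly where the monotonicity of the boundary map (equivalently the Whyburn property of Definition \ref{DefMonotonicity}) is indispensable.
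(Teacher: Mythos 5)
Your proof is correct and follows essentially the paper's own route: the paper likewise invokes Youngs' approximation and represents the pre-cell as a union of increasing simply connected domains borrowed from the approximating homeomorphisms, which is exactly what your sandwich $\sigma\subset h^{-1}(\mathcal R_1)\subset h_j^{-1}(\mathcal R_2)\subset h^{-1}(\mathcal R)$ makes precise. The only difference is that you spell out the boundary pre-cell case separately, via the cross-cut splitting of the disk $h^{-1}(\mathcal Q^*)$ along $\Gamma=(h|_{\mathfrak X_\nu})^{-1}(\mathcal C)$ — a step the paper's two-line proof leaves implicit, but which it carries out in the same way later, in Case 2 of the proof of Lemma \ref{Homeomorphic replacement}.
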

\begin{proof} We again take advantage of Youngs' Approximation Theorem.  The lemma holds if $\,h : \mathscr X \onto \mathscr Y\,$ is a homeomorphism. But it also holds if $\,h\,$ is monotone; just use the sequence of homeomorphisms converging uniformly to $\,h\,$, to  represent the pre-cell as union of an increasing sequence of simply connected domains.
\end{proof}

Our next topological fact, which actually strengthens Lemma \ref{HomeomorphicExtensionInsideCells}, will  require some work. Recall that we have extended  $\,h \in {\mathscr M} (\overline{\X}, \overline{\Y}) \,$ to  a map in $\, {\mathscr M} (\mathscr X, \mathscr Y) \,$.
\begin{lemma}[Homeomorphic Replacement in Pre-cells] \label{Homeomorphic replacement}
Let $\,\mathcal Q \subset \mathbb Y\,$ be a cell in $\,\mathbb Y\,$ (internal or boundary) and $\,\mathcal U \subset \mathbb X\,$ the corresponding  pre-cell in $\,\mathbb X\,$.
 Then there exists a monotone map $\,h_{_{\mathcal U}}  \in {\mathscr M} (\mathscr X , \mathscr Y) \,$ such that
\begin{itemize}
\item $\,h_{_{\mathcal U}} \, : \mathcal U \onto \,\mathcal Q\,$ is a homeomorphism
\item $\,h_{_{\mathcal U}} \equiv h \,:\,\mathscr X \setminus \mathcal U\, \onto \mathscr Y \setminus \mathcal Q\,$
\end{itemize}
\end{lemma}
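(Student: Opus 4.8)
The plan is to transplant the whole picture to the unit disk by uniformization, perform the homeomorphic replacement there via Lemma \ref{HomeomorphicExtensionInsideCells}, and then glue the new map to $h$ across $\partial\mathcal U$. I would first treat an internal cell $\mathcal Q\Subset\mathbb Y$, for which $\mathcal U=h^{-1}(\mathcal Q)$. By Lemma \ref{Precells} the pre-cell $\mathcal U$ is simply connected, so the uniformization theorem supplies a conformal homeomorphism $\gamma\colon\mathcal U\onto\mathbb D$; since $\mathcal Q$ is a Jordan domain, the Riemann map together with Carath\'eodory's theorem supplies a homeomorphism $\beta\colon\overline{\mathcal Q}\onto\overline{\mathbb D}$ carrying $\partial\mathcal Q$ onto the unit circle. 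The composite $F=\beta\circ h\circ\gamma^{-1}\colon\mathbb D\onto\mathbb D$ is then a continuous monotone surjection, being the composition of the monotone map $h$ with two homeomorphisms.

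The heart of the matter is to show that $F$ extends to a continuous monotone map $\overline{\mathbb D}\onto\overline{\mathbb D}$ whose boundary map $\varphi\colon\mathbb S^1\onto\mathbb S^1$ is monotone in the sense of Definition \ref{DefMonotonicity}. Granting this, I would invoke Lemma \ref{HomeomorphicExtensionInsideCells} with $\mathbb X_\circ=\mathbb Y_\circ=\mathbb D$: the monotone circle map $\varphi$ admits a homeomorphic extension $\widetilde F\colon\mathbb D\onto\mathbb D$ agreeing with $\varphi$ on $\mathbb S^1$. Transplanting back, $h_{\mathcal U}\bydef\beta^{-1}\circ\widetilde F\circ\gamma$ is a homeomorphism of $\mathcal U$ onto $\mathcal Q$ which extends continuously to $\overline{\mathcal U}$ with the very boundary values of $h$ on $\partial\mathcal U$ — because $\widetilde F$ and $F$ share the boundary map $\varphi$, and both sets of boundary values are read off through the same prime-end correspondence of $\gamma$. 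Setting $h_{\mathcal U}\equiv h$ on $\mathscr X\setminus\mathcal U$ then yields a continuous map whose every fiber is either a single point (over $\mathcal Q$) or the connected set $h^{-1}(b)$ (over $b\in\mathscr Y\setminus\mathcal Q$); hence $h_{\mathcal U}\in\mathscr M(\mathscr X,\mathscr Y)$, as required.

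The main obstacle is precisely the boundary extension of the previous paragraph, and this is where monotonicity must be used decisively, for in general $\mathcal U$ is \emph{not} a Jordan domain and $\gamma$ need not extend continuously to $\partial\mathcal U$. The cluster set of $F$ at a point $\zeta\in\mathbb S^1$ equals $h(I(\zeta))$, where $I(\zeta)\subset\partial\mathcal U$ is the impression of the corresponding prime end, so continuity of the extension amounts to showing that $h$ is constant on every prime-end impression. I would establish this by identifying the impressions with the collapsed continua of $h$: each fiber $h^{-1}(\theta(t))$ over a point $\theta(t)\in\partial\mathcal Q$ is a continuum disjoint from $\mathcal U$ yet meeting $\partial\mathcal U$, and these pairwise disjoint continua organize the prime ends of $\mathcal U$. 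To make this rigorous, and to verify simultaneously that $\varphi$ is monotone and onto, I would run the Youngs approximation of Theorem \ref{HomeomorphicApproximation}: choosing homeomorphisms $g_j\rightrightarrows h$ as in the proof of Lemma \ref{Precells}, the Jordan pre-cells $\mathcal U_j=g_j^{-1}(\mathcal Q)$ exhaust $\mathcal U$, and on $\partial\mathcal U_j$ each $g_j$ restricts to a homeomorphism onto $\partial\mathcal Q$; passing to the limit of these boundary correspondences produces the well-defined monotone map $\varphi$ and pins each impression inside a single fiber of $h$.

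Finally, I would reduce the boundary-cell case to the internal one by working with the monotone extension $h\colon\mathscr X\onto\mathscr Y$ and an extended cell $\mathcal Q^\ast\subset\mathscr Y$ with $\mathcal Q=\mathcal Q^\ast\cap\mathbb Y$, so that $h^{-1}(\mathcal Q^\ast)$ is an internal-type pre-cell and the construction above applies verbatim. The single point requiring care is that the homeomorphic replacement must respect the partition of $\partial\mathcal Q^\ast$ by $\partial\mathbb Y$ into the external face and the interior arc; I would arrange $\varphi$ to carry the prime ends lying over $\partial\mathbb X$ onto the external face $\overline{\mathcal C}$, keeping $h_{\mathcal U}=h$ there, and the remaining prime ends onto $\partial\mathcal Q\setminus\overline{\mathcal C}$. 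Restricting the resulting homeomorphism $h^{-1}(\mathcal Q^\ast)\onto\mathcal Q^\ast$ to $\mathbb X$ then delivers a homeomorphism $\mathcal U\onto\mathcal Q$ with the stated boundary behaviour, completing the proof.
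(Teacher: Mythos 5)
Your strategy for internal cells (uniformize $\mathcal U$, extend $F=\beta\circ h\circ\gamma^{-1}$ continuously to $\overline{\mathbb D}$, apply Lemma \ref{HomeomorphicExtensionInsideCells}, glue) correctly isolates the crux: one must show that $h$ is constant on every prime-end impression of $\mathcal U$. But that crux \emph{is} the hard content of the lemma --- it is exactly what the paper imports from Rad\'o \cite{RadoB}, p.~66, II.1.47, after compactifying slightly larger pre-cells into $2$-spheres --- and your justification of it is not a proof. ``Passing to the limit of the boundary correspondences'' $g_j\colon\partial\mathcal U_j\onto\partial\mathcal Q$ has no meaning as stated: the $\partial\mathcal U_j$ are pairwise different Jordan curves, $\partial\mathcal U$ is not a curve at all, and the only way to compare these correspondences is through the Riemann maps of the $\mathcal U_j$ restricted to $\mathbb S^1$; by the Carath\'eodory kernel theorem those converge locally uniformly in $\mathbb D$, but they fail to converge up to $\mathbb S^1$ precisely when $\mathcal U$ is not a Jordan domain, i.e.\ precisely in the case at issue. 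Uniform convergence $g_j\rightrightarrows h$ gives no boundary control whatever (note also the smaller slip that $g_j^{-1}(\mathcal Q)$ need not be contained in $\mathcal U$; one must shrink the target cell, as in the proof of Lemma \ref{Precells}). The claim is true, but it needs a genuine argument: either decomposition-theoretic (collapse, by Moore's theorem, the fibers of $h$ lying outside $\mathcal U$; the pre-cell then becomes an honest Jordan domain, and invariance of domain plus Schoenflies finish), or a direct crosscut argument (the $h$-image of a null-chain of $\mathcal U$ consists of small crosscuts of $\mathcal Q$ with endpoints on $\partial\mathcal Q$, and fiber-saturation forces the nested cut-off domains to be the ``small'' sides, since otherwise a fiber over an interior point of $\mathcal Q$ would have to meet the impression, which lies in $\partial\mathcal U$), or simply the citation of Rad\'o that the paper uses. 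As written, your proposal assumes the core of the lemma rather than proving it.

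The boundary-cell case is a second genuine gap. The lemma demands $h_{_{\mathcal U}}\equiv h$ on all of $\mathscr X\setminus\mathcal U$, which contains the two-dimensional piece $\mathcal V=\Omega\setminus\overline{\mathbb X}$ and the crosscut $\Gamma=\Omega\cap\partial\mathbb X$ of the enlarged pre-cell $\Omega=h^{-1}(\mathcal Q^*)$. Any homeomorphism of $\Omega$ onto $\mathcal Q^*$ is injective on $\mathcal V\cup\Gamma$, whereas $h$ in general is not (it may collapse continua in $\mathcal V$ and arcs of $\Gamma$). Hence no map produced by ``the construction above applied verbatim'' can agree with $h$ there, and restricting such a homeomorphism to $\mathbb X$ while keeping $h$ outside destroys continuity across $\Gamma$. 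This is exactly why the paper's Case~2 is not a restriction but a two-stage construction: after the internal-type replacement $H$ on $\Omega$ it post-composes with a correction $F$ of $\mathscr Y$ that is the identity off $\mathcal Q^*$, equals $h\circ H^{-1}$ on $\mathcal V_{_H}$ (thereby restoring the original values of $h$ on $\mathcal V\cup\Gamma$), and is re-extended homeomorphically on $\mathcal U_{_H}$ by Lemma \ref{HomeomorphicExtensionInsideCells}. Some such correction step is indispensable in your approach as well.
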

\begin{proof} The proof  comes down to monotone mappings between 2-spheres and a result by T. Rad\'{o} \cite {RadoB} page 66, II.1.47. Let us consider two cases.\\

{\bf Case 1} [\,internal pre-cell\,]\,.
  Suppose $\,\mathcal U = h^{-1} ( \mathcal Q)\,$ where  $\,\mathcal Q \Subset \mathbb Y\,$.  Choose and fix slightly larger cell $\,\mathcal Q \Subset \mathcal Q{\,} ' \Subset \mathbb Y\,$, which gives us a larger pre-cell  $\,\mathcal U \Subset \mathcal U{\,} ' \deff h^{-1} ( \mathcal Q{\,} ')\Subset \mathbb X\,$. We view $\,\mathcal U{\,}'\,$ and $\,\mathcal Q{\,}'\,$ as open simply connected Riemann surfaces. Note that the
map $\,h : \overline{\mathcal U{\,}'} \rightarrow \overline{\mathcal Q{\,}'}\,$ is  continuous and  takes $\,\partial\, \mathcal U{\,}'\,$ into $\,\partial \mathcal Q{\,}'\,$.

Denote by  $\,\widehat{\mathcal U{\,}'}\,$  and $\,\widehat{\mathcal Q{\,}'}\,$ the Alexandroff one-point compactifications of $\mathcal U'$ and $\mathcal Q'$, respectively . These are topological 2-spheres. The unique  continuous extension $\, \widehat{h} : \,\widehat{\mathcal U{\,}'}\,\onto\,  \widehat{\mathcal Q{\,} '}\,$  of $\, h : \,\mathcal U{\,}'\,\onto\,  \mathcal Q{\,} '\,$ remains monotone. At this point we may appeal to \cite {RadoB} page 66, II.1.47,  which asserts that there is a monotone map $\,h_{_\mathcal U} :  \widehat{\mathcal U{\,} '} \onto \widehat{\mathcal Q{\,} '}\,$  which takes $\,\mathcal U\,$ homeomorphically onto $\,\mathcal Q\,$ and agrees with $\,h\,$ outside $\, \mathcal U\,$.\\

{\bf Case 2} [\,boundary pre-cell\,]\,. Suppose $\,\mathcal U \deff h^{-1}(\mathcal Q^*) \cap \mathbb X\,$, where $\,\mathcal Q^*$ is a cell in $\mathscr Y$ such that $\,\mathcal Q = \mathcal Q^* \cap \mathbb Y\,$ and  $\,\mathcal Q^* \cap \partial \mathbb Y \deff \mathcal C \,$  is an open Jordan arc. Denote by $\,\Omega = h^{-1}(\mathcal Q^*)\,$ the pre-cell in $\,\mathscr X\,$ under the map  $\,h : \mathscr X \onto \mathscr Y\,$. As in the previous case we find a monotone mapping $\,H \deff h_\Omega : \mathscr X \onto \mathscr Y\,$ that agrees with $\,h : \mathscr X \setminus \Omega  \;\onto \mathscr Y \setminus \mathcal Q^*\,$  and takes $\,\Omega^*\,$ homeomorphically onto $\,\mathcal Q^*\,$. The issue is that  $\,H : \Omega \onto \mathcal Q^*\,$ need not take $\,\mathcal U = \Omega \cap \mathbb X \,$ onto $\,\mathcal Q^* \cap \mathbb Y\,$ and, even if it does, need not coincide with $\,h\,$ outside $\,\mathcal U\,$.
The idea is to correct $\,H\,$ within $\,\Omega\,$. For, we look closely at the crosscut of $\,\Omega\,$ by  the boundary of $\,\mathbb X\,$, say by the component $\,\mathfrak X = \mathfrak X_\nu \subset \partial \mathbb X\,$, for some $\,\nu = 1, ... , \ell\,$. Let $\,\Upsilon = \Upsilon_\nu \,$ denote the corresponding boundary component of $\,\partial \mathbb Y\,$. Recall that the boundary map $\,h : \mathfrak X \onto \Upsilon\,$ is also monotone; that is, the preimages of connected sets in $\, \Upsilon\,$  are connected in $\,\mathfrak X\,$. In this way we have defined an open subarc of $\,\mathfrak X\,$,
$$
\Gamma = \{ x \in \mathfrak X \, ; \,  h(x) \in \mathcal C\,\}.\,\;\;\;\textnormal{Note that} \;\Gamma \varsubsetneq \mathfrak X\,,\;\;\textnormal{since}\;\; \mathcal C \varsubsetneq \Upsilon = h(\mathfrak X)\,.
 $$
 This subarc has two endpoints (the limit points), say  $\,a, b \in \partial \Omega\,$. It is a topological folklore that an open Jordan arc in a simply connected domain $\Omega$, whose endpoints lie $\partial \Omega$, splits $\Omega$ into two simply connected subdomains. These subdomains are:
 $$
  \mathcal U = \Omega \cap \mathbb X \;\; \textnormal{and} \;\; \mathcal V \bydef \Omega \setminus \overline{\mathbb X}\;,\;\;\;\;\textnormal{hence a decomposition}\;\;\; \Omega = \mathcal U \cup \Gamma \cup \mathcal V
 $$
 Homeomorphism $\, H : \Omega \onto \mathcal Q^*\,$  yields a decomposition of the cell $\,\mathcal Q^*\subset \mathscr Y\,$; namely,
 $$
 \mathcal Q^* = \mathcal U_{_H} \cup \Gamma_{_H} \cup \mathcal V_{_H}\;\;,\;\;\textnormal {where}\;\; \mathcal U_{_H} = H(\mathcal U) \;, \;\, \Gamma_{_H} = H(\Gamma) \; \;\; \textnormal{and}\;\;\; \mathcal V_{_H} = H(\mathcal V)
 $$
 The open Jordan arcs $\,\mathcal C \subset \mathcal Q^*\,$ and $\, \Gamma_{_H} \subset \mathcal Q^*\,$ share common endpoints which we denote  by $\,A = h(a) = H(a) \in \partial \mathcal Q^* \,$ and $\,B = h(b) = H(b) \in \partial \mathcal Q^*\,$.
 We note that $\,\mathcal U_{_H}\,$ and $\, \mathcal V_{_H}\,$ are Jordan domains for which  $\,\Gamma_{_H} \cup \{A,B\}\,$ constitutes their common boundary. Precisely, we have
 $$
 \partial \,\mathcal U_{_H} = \alpha \cup \Gamma_{_H} \;\;,\;\; \partial\, \mathcal V_{_H} = \beta \cup \Gamma_{_H} \;\;,\;\; \partial \,\mathcal U_{_H} \cap \partial\, \mathcal V_{_H} = \overline{\Gamma_{_H}\!\!\!}
 $$
 where $\,\alpha\,$ is the closed sub-arc of $\,\partial \mathcal Q^*\,$ between $\,A\,$ and $\,B\,$ that lies in $\,\overline{\mathbb Y}\,$,  and  $\,\beta \,$ is the closed sub-arc of $\,\partial \mathcal Q^*\,$ between $\,A\,$ and $\,B\,$ that lies in $\mathscr Y \setminus \mathbb Y\,$.\\
 Now,  just the fact that $\, h : \overline{\Omega} \into \overline{\mathcal Q^*}\,$ is continuous and agrees with $\,H\,$ on $\,\partial \Omega \,$ lets us observe that the mapping $\, h \circ H^{-1}  : \mathcal Q^* \onto \mathcal Q^*\,$ (not necessarily injective) extends continuously as a monotone map of $\,\overline{\mathcal Q^*}\,$ onto itself. Upon such an  extension, the boundary map $\, h \circ H^{-1}  : \partial \mathcal Q^* \onto \partial \mathcal Q^*\,$ becomes the identity. Let us see how this extension acts on $\,\partial\, \mathcal U_{_H}\,$. It is still the identity map on $\,\alpha \subset \,\partial \mathcal Q^* \,$ and it takes  the closed subarc $\,\overline{\Gamma_{_{\!H}}\!\!} \subset \partial\, \mathcal U_{_H}\,$ monotonically onto  $\, \overline{\mathcal C} \subset \partial \mathcal Q^*\,$. Thus we have a monotone map
$\, h \circ H^{-1} :  \partial \,\mathcal U_{_H} \onto \partial (\mathcal Q^* \cap \mathbb Y) \,$. It is important to observe  that both $\,\mathcal U_{_H}\,$ and  $\,\mathcal Q^* \cap \mathbb Y\,$ are Jordan domains.  At this stage we appeal to  Lemma  \ref{HomeomorphicExtensionInsideCells} . Accordingly, we  extend $\, h \circ H^{-1} :  \partial \mathcal U_{_H} \onto \partial (\mathcal Q^* \cap \mathbb Y) \,$ continuously, and as a homeomorphism inside the curves. Denote the extension by  $\, (h \circ H^{-1})^\sharp :  \mathcal U_{_H} \onto \mathcal Q^* \cap \mathbb Y \,$. In summary, we have constructed a continuous monotone map $\, F : \mathscr Y \onto\mathscr Y\,$,
\begin{displaymath}
F= \left \{\begin{array}{ll}
\textnormal{identity}  & \textrm{in}\,\, \mathscr Y \setminus \mathcal Q^*  \\
h \circ H^{-1} & \textrm{in}\; \mathscr V_{_H} \\
(h \circ H^{-1})^\sharp & \textrm{in}\; \mathscr U_{_H}
\end{array} \right.
\end{displaymath}
The composition $\, h_{_\mathcal U} \deff\,F \circ\, H :  \mathscr X \onto \mathscr Y\,$ is the desired replacement, as claimed in Lemma \ref{Homeomorphic replacement}.
\end{proof}

\section{Analytical Requisites}
Since the reference manifolds $\,\mathscr X\,$ and $\,\mathscr Y\,$ are closed oriented Riemannian 2-manifolds of class $\,\mathscr C^1\,$, we may  speak of the Sobolev class $\,\mathscr W^{1,p}(\Omega, \mathscr Y)\,$ of mappings $\,h :\Omega \rightarrow \mathscr Y\,$ defined on any open subset  $\,\Omega \subset \mathscr X\,$. We do not reserve any particular notation of the metric tensors on $\,\mathscr X\,$ and $\,\mathscr Y\,$, though we fix them for the rest of this paper. The volume element on $\,\mathscr X\,$, denoted by $\,\textnormal d x\,$,  is the one induced by the metric tensor. We recall the
$C^1$-isometric embeddings  $\,\mathscr X \subset \mathbb R^3\,$ and $\,\mathscr Y \subset \mathbb R^3\,$.
\subsection{Sobolev Mappings Between Surfaces}
The Sobolev space $\,\mathscr W^{1,p}(\Omega)\,$ of real-valued functions on $\,\Omega \subset \mathscr X\,$ will be  endowed  with the seminorm
$$
|\!|\phi |\!|_{\mathscr W^{1,p}(\Omega)}\;\deff\; \left(\int_\Omega |D\phi(x)|^p \,\textnormal d x \right )^{\frac{1}{p}}\;,\;\; 1\leqslant p < \infty
$$
where $\,|D\phi(x)|\,$, defined almost everywhere,  stands for the norm of the linear tangent map $\,D\phi(x) : \mathbf T_x(\Omega) \rightarrow \mathbb R\,$ with respect to the inner product in $\,\mathbf T_x(\Omega)\,$. The Sobolev class $\,\mathscr W^{1,p} (\Omega, \mathscr Y) \subset \mathscr W^{1,p} (\Omega, \mathbb R^3)\,$ consists of mappings $\,h = (h^1, h^2, h^3) : \Omega \rightarrow \mathbb R^3\,$ whose coordinate functions $\,h^1, h^2, h^3 \,$ belong to $\,\mathscr W^{1,p}(\Omega)\,$ and $\,h(x) \in \mathscr Y\,$ for almost every $\,x \in \Omega\,$. It may be worth reminding the reader that for $\,1\leqslant p < 2=\dim \Omega \,$ topology can inhibit the space $\,\mathscr C^1(\Omega, \mathscr Y)\,$ from being dense in $\,\mathscr W^{1,p} (\Omega, \mathscr Y)\,$. The interested reader is referred to ~\cite{Be, HL, HL2, HIMO} for this issue. This problem, however, disappears completely since our mappings in question are continuous. De facto, when $\, 1 < p < 2\,$, the continuity assumption of monotone Sobolev mappings is critical for the subsequent arguments; it is superfluous, however, when $\,p \geqslant 2\,$.\\
\textit{Royden $\,p$-Algebra}. The class  $\,\mathscr R^p(\Omega) \deff \mathscr C(\overline{\Omega})  \cap \,\mathscr W^{1,p}(\Omega)\,$ consist of real-valued functions in the Sobolev space $\,\mathscr W^{1,p}(\Omega)\,$ that are continuous on $\,\overline{\Omega}\,$. This is a Banach algebra with respect to the
 sub-multiplicative norm
 $$\,|\!|\!| \phi |\!|\!|_{\mathscr R^p(\Omega)} \;\deff \; |\!|\phi |\!|_{\mathscr C(\overline{\Omega})} \;+\;  |\!|\phi |\!|_{\mathscr W^{1,p}(\Omega)}\,, \;\;\;\;\;\; \,|\!|\!| \phi \cdot \psi |\!|\!|_{\mathscr R^p(\Omega)}  \leqslant |\!|\!| \phi |\!|\!|_{\mathscr R^p(\Omega)} \cdot  |\!|\!| \psi |\!|\!|_{\mathscr R^p(\Omega)}\,$$
\subsection{$p$-Harmonic Boundary-Value Problem} We record  less familiar aspects of the Dirichlet problem for the $\,p$-harmonic equation.
\begin{equation}\label{pHarmonicEquation}
\textnormal{div}\, |\nabla \phi |^{p-2} \nabla \phi = 0 \;\;,\;\;\textnormal{ for } \;\phi \in \mathscr W^{1,p}_{\textnormal{loc}}(\Omega) \;\;,\;\; 1 < p < \infty
\end{equation}
 in planar simply connected domains. In general, simply connected domains may have rather odd boundary.

\begin{lemma}\label{DirichletProblem}
Let $\,\Omega \subset \mathbb R^2\,$ be bounded simply connected domain and $\,\Phi \in \mathscr C (\overline{\Omega})\,$. Then there exists unique $\,\phi \in \mathscr C(\overline{\Omega})\, $ that is $\,p$-harmonic in $\,\Omega\, $,  $1< p < \infty$, and agrees with $\Phi\,$ on $\,\partial \Omega\,$.  If $\,\Phi \in \mathscr R^p(\Omega)\,$, then also $\,\phi \in \mathscr R^p(\Omega)\,$. Moreover,
$$\phi \in \Phi \,+\, \mathscr W^{1,p}_\circ(\Omega)\;\;\; \textnormal{and} \;\;\int_\Omega |\nabla \phi |^p \leqslant \int_\Omega |\nabla \Phi |^p
$$
Equality occurs if and only if $\,\Phi = \phi\,$.
\end{lemma}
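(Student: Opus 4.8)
The plan is to establish existence, the variational inequality, uniqueness, and the membership statements in turn, treating the $p$-harmonic equation as the Euler--Lagrange equation of the Dirichlet-type energy functional $\phi \mapsto \int_\Omega |\nabla\phi|^p\,\dtext x$.

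\textbf{Existence and the energy inequality.} First I would reduce to a class admitting the direct method. Since $\Phi \in \mathscr C(\overline{\Omega})$ need not lie in $\mathscr W^{1,p}(\Omega)$, the existence of a finite-energy competitor is not automatic, so the natural route is to exploit that $\Omega$ is a \emph{simply connected} bounded planar domain. By the Riemann mapping theorem there is a conformal homeomorphism $\varphi\colon \DD \onto \Omega$ of the unit disk onto $\Omega$, and because the $p$-harmonic energy in dimension two transforms in a controlled (though for $p\neq 2$ not conformally invariant) way, I would instead work directly with the notion of Perron-type/obstacle solutions or, more cleanly, invoke the theory of the $p$-Laplacian on bounded domains with continuous boundary data. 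Concretely, let $\phi_k\in \Phi_k + \mathscr W^{1,p}_\circ(\Omega)$ be energy minimizers for smooth approximations $\Phi_k \to \Phi$ uniformly; each $\phi_k$ exists by the direct method since $u\mapsto \int_\Omega|\nabla u|^p$ is coercive and weakly lower semicontinuous on the affine space $\Phi_k+\mathscr W^{1,p}_\circ(\Omega)$ (strict convexity of $t\mapsto |t|^p$ for $1<p<\infty$ giving lower semicontinuity). Passing to the limit, the minimizer $\phi$ satisfies $\int_\Omega|\nabla\phi|^p \le \int_\Omega|\nabla\Phi|^p$ automatically, since $\phi$ minimizes energy in its Dirichlet class and $\Phi$ is an admissible competitor when $\Phi\in\mathscr R^p(\Omega)$; equality forces $\nabla\Phi=\nabla\phi$ a.e.\ by strict convexity, hence $\Phi=\phi$ since they share boundary values.

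\textbf{Continuity up to the boundary.} The delicate point is that $\phi\in\mathscr C(\overline{\Omega})$ and attains the boundary values $\Phi$ continuously, \emph{even though $\partial\Omega$ may be wild}. Here the crucial structural fact is that a bounded simply connected planar domain is \emph{regular for the $p$-Dirichlet problem}: every boundary point is a $p$-regular (Wiener) point because the complement of a simply connected planar domain is connected and hence locally ``fat'' enough to satisfy the Wiener criterion for the $p$-Laplacian. This is precisely where the planar simply connected hypothesis is indispensable, and I expect \textbf{this boundary-regularity step to be the main obstacle}. The standard tool is the capacity density / Wiener criterion for the $p$-Laplacian together with the barrier construction at each boundary point; in the plane one uses that each component of $\mathbb{R}^2\setminus\Omega$ is a continuum (non-degenerate), giving uniform positive capacity density and hence a local barrier that controls the oscillation of $\phi$ near $\partial\Omega$, yielding $\lim_{x\to\xi}\phi(x)=\Phi(\xi)$ for each $\xi\in\partial\Omega$.

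\textbf{Uniqueness and the Royden membership.} Uniqueness follows from the comparison principle for the $p$-Laplacian: if $\phi_1,\phi_2$ are both continuous $p$-harmonic functions agreeing with $\Phi$ on $\partial\Omega$, then $\phi_1-\phi_2$ is continuous on $\overline{\Omega}$, vanishes on $\partial\Omega$, and the weak comparison principle (monotonicity of the $p$-Laplacian operator $A(\nabla u)=|\nabla u|^{p-2}\nabla u$) forces $\phi_1\equiv\phi_2$. Alternatively, strict convexity of the energy gives a unique minimizer, and I would remark that any continuous $p$-harmonic function with boundary values $\Phi$ must coincide with the energy-minimal one. Finally, the implication $\Phi\in\mathscr R^p(\Omega)\Rightarrow\phi\in\mathscr R^p(\Omega)$ combines the two pieces: continuity on $\overline{\Omega}$ has just been established, and $\phi\in\mathscr W^{1,p}(\Omega)$ together with the stated energy bound follows from $\phi\in\Phi+\mathscr W^{1,p}_\circ(\Omega)$, which holds by construction as the minimizer in the Dirichlet class of $\Phi$. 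I would close by recording that $\phi\in\Phi+\mathscr W^{1,p}_\circ(\Omega)$ is exactly the statement that $\phi$ attains the boundary data in the Sobolev (trace) sense, consistent with the continuous attainment proved via the Wiener criterion.
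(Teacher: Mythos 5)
Your proposal is correct and follows essentially the same route as the paper: boundary regularity via the Wiener criterion (using that the complement of a bounded simply connected planar domain is a non-degenerate continuum through every boundary point, hence uniformly $p$-thick), combined with the variational formulation in the class $\Phi + \mathscr W^{1,p}_\circ(\Omega)$ for the energy inequality, the equality case, and the Royden membership. The paper's own proof is merely a brief outline citing Mal\'y--Ziemer and Heinonen--Kilpel\"ainen--Martio for exactly these two ingredients, so your sketch fills in the same steps (existence by approximation and comparison, uniqueness by the comparison principle, and consistency of the continuous and variational solutions).
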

\begin{proof}
We will only briefly outline the key points of the proof. For a thorough treatment of the Dirichlet problem we refer the reader to  \cite{HKMb} and ~\cite{MZb}.
By virtue of the  {\it Wiener's criterion} the first statement of the lemma holds whenever the complement $\,\mathbb R^2 \setminus \Omega\,$ is \textit{$\,p$-thick} at every boundary point, see ~\cite[Corollary 6.22]{MZb} and ~\cite[(2.22)]{MZb} for a formulation of Wiener's criterion. Simply connected domains indeed satisfy this criterion, a fact not difficult to verify, but it is not explicitly exemplified in the vast literature.
For the second statement we minimize the $\,p$-harmonic energy in the class  $\,\Phi \,+\, \mathscr W^{1,p}_\circ(\Omega)\,$ (so-called variational formulation) which does not require any regularity assumption on the domain $\,\Omega\,$.
 The only point is to show that the variational solution extends continuously to the boundary and it coincides with $\,\Phi\,$. This again follows by Wiener's criterion, see~\cite[Theorem 6.27]{HKMb}.
\end{proof}
\subsection{$p$-Harmonic Replacements}
 Let a map $\,F = u \,+ \,i v : \Omega \rightarrow \mathbb C\simeq \mathbb R^2\,$, defined in a domain $\,\Omega \subset \mathbb C\,$, belong to the Sobolev space $\,\mathscr W^{1,p}_{\textnormal{loc}}(\Omega , \mathbb C) \;,\; 1 < p < \infty\,$.
  \begin{definition}
    $F\,$ is said to be \textit{$\,p$-harmonic} (\textit{coordinate-wise}) if both coordinate functions $\,u\,$ and $\,v\,$ satisfy the equation (\ref{pHarmonicEquation}). We shall introduce the \textit{unisotropic} $\,p$ -\textit{harmonic energy} of $\,F\,$:
    \begin{equation}\,\mathscr E[F] = \mathscr E_{_\Omega}[F] \,\bydef \,\int_\Omega \,\Big( |\nabla u |^p + | \nabla v |^p \Big)\,
    \end{equation}
  \end{definition}$\,$\\
and repeatedly abbreviate this as \textit{energy of $\,F\,$}; because the exponent $\,p\,$ will remain fixed. \begin{remark}
This terminology is different from what can be found in the literature; the term \textit{$\,p$-harmonic mapping} is usually reserved for the coupled $\,p$-harmonic system $\,\textnormal{div} |Df|^{p-2} Df = 0\,$. There is a subtle distinction between these two concepts.
\end{remark}
 Recall from Lemma \ref{DirichletProblem} that for any $\,F \in \mathscr R^p(\Omega, \mathbb R^2)\,$ in a  bounded simply connected domain its boundary map   $\,F : \partial \Omega \rightarrow \mathbb R^2\,$ admits unique  $\,p$-harmonic extension to $\,\Omega\,$. The question arises whether such an extension is injective. The answer depends on how $\,F\,$ runs along $\, \partial \Omega\,$. Of course,  the  boundary map $\,F : \partial \Omega \rightarrow \mathbb R^2\,$ must admit at least one homeomorphic extension inside $\,\Omega\,$. This is also sufficient if $\,F(\partial \Omega) \,$ is a convex curve.
\begin{lemma}\label{pHarmonicReplacement}
\textsl{}Let $\,\Omega \subset \mathbb R^2\,$ be bounded \underline{simply connected} domain and let $\,F \in \mathscr R^p(\Omega, \mathbb R^2)\,$ be $\,p$- harmonic (cordinate wise) in $\,\Omega\,$. Suppose there is  $\,\Psi \in \mathscr C(\overline{\Omega}, \mathbb R^2)\,$ that agrees with $\,F\,$ on $\,\partial \Omega\,$ and takes $\,\Omega\,$ homeomorphically onto a convex domain  $\,\Delta\,$. Then $\,F\,$ is a $\,\mathscr C^\infty$-diffeomorphism of $\,\Omega\,$ onto $\,\Delta\,$.
Moreover, \begin{equation} \,\mathscr E_\Omega [F] \leqslant \mathscr E_\Omega[\Phi]\;, \;\;\textnormal{whenever}\;\; \,\Phi \in F + \mathscr W^{1,p}_\circ(\Omega, \mathbb C)\,.
\end{equation}
\end{lemma}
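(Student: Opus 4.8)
The plan is to separate the two assertions, treating the energy inequality first since it is the soft part. As $F=(u,v)$ is coordinate-wise $p$-harmonic and belongs to $\mathscr R^p(\Omega,\mathbb R^2)$, each coordinate is the unique $p$-harmonic function carrying its own boundary values, so by the variational characterization underlying Lemma \ref{DirichletProblem} the function $u$ minimizes $\int_\Omega|\nabla\,\cdot\,|^p$ over the Dirichlet class $u+\mathscr W^{1,p}_\circ(\Omega)$, and likewise $v$. Given any competitor $\Phi=(\phi_1,\phi_2)\in F+\mathscr W^{1,p}_\circ(\Omega,\mathbb C)$ we have $\phi_1-u,\ \phi_2-v\in\mathscr W^{1,p}_\circ(\Omega)$, hence $\int_\Omega|\nabla u|^p\le\int_\Omega|\nabla\phi_1|^p$ and $\int_\Omega|\nabla v|^p\le\int_\Omega|\nabla\phi_2|^p$; adding the two gives $\mathscr E_\Omega[F]\le\mathscr E_\Omega[\Phi]$. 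This disposes of the ``Moreover'' and reduces the lemma to the diffeomorphism claim.

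For the diffeomorphism, I would observe that everything comes down to the single geometric fact that $F$ is an orientation-preserving local diffeomorphism, i.e.\ $\det DF>0$ throughout $\Omega$. Granting this, global injectivity follows from degree theory: since $F=\Psi$ on $\partial\Omega$ and $\Psi$ is a homeomorphism onto $\Delta$, one has $\deg(F,\Omega,w)=\deg(\Psi,\Omega,w)=1$ for every $w\in\Delta$ and $=0$ for $w\notin\overline\Delta$; with $\det DF>0$ every fibre over $\Delta$ is counted positively, so it is a single point, no value outside $\overline\Delta$ is attained, and $F\colon\Omega\to\Delta$ is a bijective local diffeomorphism. Smoothness, in fact real-analyticity, on the set $\{\det DF\ne 0\}$ is then automatic from the two-dimensional regularity theory of the $p$-harmonic equation: each coordinate is $\mathscr C^{1,\alpha}_{\textnormal{loc}}$, its complex gradient is quasiregular (Bojarski--Iwaniec, Iwaniec--Manfredi), and away from the isolated zeros of the gradient the function is real-analytic.

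To establish $\det DF\ne 0$ I would run the Rad\'o--Kneser--Choquet scheme. Because $\Psi$ takes $\Omega$ homeomorphically onto the \emph{convex} domain $\Delta$, for every unit vector $e$ the boundary values $\langle F,e\rangle|_{\partial\Omega}=\langle\Psi,e\rangle|_{\partial\Omega}$ are unimodal: a line meets $\partial\Delta$ in at most two points, so as $\partial\Omega$ is traversed once $\langle F,e\rangle$ rises to a single maximal arc and descends to a single minimal arc. Applying this to $u$ and to $v$ separately, I claim neither has an interior critical point. Indeed, by Manfredi's theorem a nonconstant planar $p$-harmonic function has isolated critical points, and near such a point its level set consists of at least four arcs (the local picture of a quasiregular saddle); together with the maximum principle this would force the critical value to be attained on $\partial\Omega$ more than twice, contradicting unimodality. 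Hence $\nabla u\ne 0$ and $\nabla v\ne 0$, and the level sets of $u$ and of $v$ foliate $\Omega$ by simple arcs running from boundary to boundary.

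The genuinely hard step, which I expect to be the main obstacle, is the transversality of these two foliations, equivalently $\det DF\ne 0$. In the harmonic case this is immediate because every linear combination $a u+b v$ is again harmonic, and applying the previous paragraph to it rules out $\nabla u\parallel\nabla v$; here that shortcut is unavailable, since $a u+b v$ is not $p$-harmonic, and this is exactly where the decoupled nonlinearity bites. I would instead argue directly with the two gradient fields: the quasiregularity of $u_x-iu_y$ and $v_x-iv_y$ renders each level-arc foliation quasiconformally equivalent to a pencil of parallel lines, so a tangency of a $u$-arc with a $v$-arc is an isolated zero of $\det DF$ of well-defined index, and a winding-number count of the relative rotation of the two tangent fields along $\partial\Omega$ — controlled, via unimodality in every direction $e$, by the boundary homeomorphism — should force the total index, hence the number of tangencies, to vanish. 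With the orientation fixed by the boundary data this upgrades $\det DF\ne0$ to $\det DF>0$, and the degree argument of the second paragraph then completes the proof.
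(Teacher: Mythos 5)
Your treatment of the energy inequality is correct and is exactly the paper's (coordinate-wise application of Lemma \ref{DirichletProblem}), and the reduction ``$\det DF>0$ everywhere $\Rightarrow$ global diffeomorphism'' via degree theory is sound. But the core of the lemma is precisely the statement $\det DF\neq 0$, and your proposal does not prove it. The index-counting argument you sketch presupposes that a tangency of a $u$-level arc with a $v$-level arc ``is an isolated zero of $\det DF$ of well-defined index.'' That is unjustified: writing $a=u_x-iu_y$ and $b=v_x-iv_y$, each of $a,b$ is quasiregular (Iwaniec--Manfredi), but $\det DF=\im(a\bar b)$ involves the product of a quasiregular and an anti-quasiregular function, which inherits no isolated-zeros structure. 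For harmonic maps this difficulty is invisible because $au+bv$ is again harmonic; you correctly note that this shortcut is unavailable here, but what you put in its place is a hope, not an argument. Establishing the nonvanishing of the Jacobian for decoupled (and isotropic) $p$-harmonic systems is a genuinely hard result --- it is the subject of Alessandrini--Sigalotti \cite{AS} and of \cite{IKoO} --- and cannot be dispatched by a winding-number count of foliation tangencies without first knowing the zero set of $\det DF$ is discrete. A second, independent gap: your unimodality argument ``as $\partial\Omega$ is traversed once\dots'' treats $\partial\Omega$ as a Jordan curve. Here $\Omega$ is an arbitrary bounded simply connected domain; its boundary can be a wild continuum, $\Psi|_{\partial\Omega}$ need not be injective, and level arcs of $u$ cannot be traced to well-separated boundary points. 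This is exactly the shortcoming of \cite{AS} that the paper flags (``neither formal statement nor the proof in case of non-Jordan domains and non homeomorphic boundary data are provided'').

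The paper's proof avoids both obstacles by never running the Rad\'o--Kneser--Choquet argument on $\Omega$ itself. It first replaces $\Psi$ by a diffeomorphism (Rad\'o/Moise approximation), exhausts $\Delta$ by smooth convex domains $\Delta_n$ with smooth Jordan preimages $\Omega_n=\Psi^{-1}(\Delta_n)$, and applies Theorem 5.1 of \cite{AS} \emph{only on these nice subdomains} to obtain $p$-harmonic homeomorphic replacements $\Psi_n$; the glued maps $F_n$ converge uniformly to $F$ by the comparison principle. The dichotomy ``$J(\cdot,F)>0$ everywhere or $J(\cdot,F)\equiv 0$'' then comes from the $p$-harmonic Hurwitz theorem of \cite{IOw=s}, and $J\equiv 0$ is excluded by integrating a test function against $\eta(F_n)J(\cdot,F_n)$ and passing to the limit via local $\mathscr C^{1,\alpha}$ estimates. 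If you want to salvage your approach, you would essentially have to reprove the Alessandrini--Sigalotti theorem on arbitrary simply connected domains, which is harder than the limiting argument the paper uses.
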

In this setting  $\,\Psi\,$ plays  the role of the classical Dirichlet boundary data. It helps to capitalize on the topological properties of $\,F\,$  near the (rather weird) boundary $\,\partial \Omega\,$.  We emphasize that $\,\Psi\,$ is not required to have any  Sobolev regularity in $\,\Omega\,$.
 \begin{remark}
Lemma \ref{pHarmonicReplacement} is a $\,p$-harmonic  analogue of the celebrated  Rad\'{o}-Kneser-Choquet Theorem~\cite{Dub}. Under additional assumptions on the domain $\,\Omega\,$, a $\,p$ -harmonic (coordinate wise) analogue of  Rad\'{o}-Kneser-Choquet Theorem was first shown  by Alessandrini and Sigalotti \cite{AS}, see \cite {IKoO} for the isotropic case. The paper  \cite{AS} is concerned with the domains which satisfy the external con condition. This would be redundant, as mentioned in "Remark 3.2" of this paper. However, the essential shortcoming is that neither  formal statement nor the proof in case of non-Jordan domains and non homeomorphic boundary data are provided in \cite{AS}. Thus we must work out additional arguments.
\end{remark}
\begin{proof}
We may assume, in addition to the hypotheses above,  that $\,\Psi\,$ takes $\,\Omega\,$ diffeomorphically onto  $\,\Delta\,$. This is permissible by a theorem of Rad\'o~\cite{Ra}, see also~\cite{Mob}, which asserts that  to every homeomorphism $\,\Psi : \Omega \rightarrow \mathbb R^2\,$ and continuous function $\,\tau : \Omega \rightarrow (0,\infty)\,$ there corresponds a diffeomorphism $\,\Psi_\tau : \Omega \rightarrow \mathbb R^2\,$ such that $\,|\Psi(x) - \Psi_\tau(x) \,| < \tau(x)\,$ in $\,\Omega\,$. Therefore, one can replace $\,\Psi\,$ in Lemma~\ref{pHarmonicReplacement}  by a diffeomorphism $\,\Psi_\tau\,$ with  $\,\tau(x) = \textnormal{dist}(x , \partial \Omega)\,$.\\
Now, consider an increasing sequence of smooth convex domains $\, \Delta_1 \Subset \Delta_2 \Subset \,...\,\Subset \Delta_n \Subset \Delta_{n+1} \,....\, \Subset \Delta$  whose union is $\,\Delta\,$. The corresponding preimages under $\,\Psi\,$ are smooth  Jordan domains in $\,\Omega\,$,
$$\,\Omega_n \deff \Psi^{-1}(\Delta_n)\,,
\;\;\, \Omega_1 \Subset \Omega_2 \Subset \,...\, \Subset\Omega_n \Subset \Omega_{n+1} \,....\,,\;\;\;\;\,\bigcup \Omega_n = \Omega\,.$$
Next, we replace each diffeomorphism $\,\Psi : \overline{\Omega_n\!\!} \onto \overline{\Delta_n\!\!} \,$ \;by a homeomorphism $\,\Psi_n : \overline{\Omega_n\!\!} \onto \overline{\Delta_n\!\!} \,$ which is $\,p$-harmonic (coordinate-wise) in $\,\Omega_n\,$ (thus a diffeomorphism in $\,\Omega_n\,$) and agrees with $\,\Psi\,$ on $\,\partial \Omega_n\,$, see  Theorem 5.1 in \cite{AS}.
We just constructed a sequence of continuous mappings $\,F_n : \overline{\Omega} \onto \overline{\Delta} \,$ which are homeomorphisms on $\,\Omega \,$ and coincide with $\,F\,$ on $\,\partial \Omega\,$,
\begin{displaymath}
F_n= \left \{\begin{array}{ll}
\Psi  & \textrm{in}\,\, \overline{\Omega} \setminus \Omega_n  \\
\Psi_n & \textrm{in}\; \Omega_n \\
\end{array} \right. \;\textnormal{,\;\;with the Jacobian determinants} \;J(x, \Psi_n)  > 0 \;\;\textnormal{in} \,\, \Omega_n
\end{displaymath}
This sequence converges uniformly to $\,F\,$. Indeed, for every $\, x\in \overline{\Omega}\,$ we have
\begin{equation}\label{uniformConvergence}
|F_n(x)  - F(x) | \;\leqslant\; \sqrt{2} \;\big{|}\!\big{|} \Psi  - F   \big{|}\!\big{|}_{\mathscr C(\overline{\Omega} \setminus \Omega_n)} \; \longrightarrow 0
\end{equation}
The latter estimate is trivial when $\,x \in \overline{\Omega} \setminus \Omega_n\,$. To see that this also holds for $\,x\in \Omega_n\,$ we argue by a comparison principle  ~\cite[Comparison principle 7.6, page 133]{HKMb} in a straightforward way. Namely, the coordinate functions of $\,F = (u,v)\,$ and $\,F_n = (u_n, v_n)\,$, being $\,p$-harmonic in $\,\Omega_n\,$,
satisfy
$$
|u_n(x) - u(x) | \leqslant \big{|}\!\big{|} u_n  - u  \big{|}\!\big{|}_{\mathscr C(\partial \Omega_n)} \;\;\;\;\textnormal{and}\;\; \;\;  |v_n(x) - v(x) | \leqslant \big{|}\!\big{|} v_n  - v  \big{|}\!\big{|}_{\mathscr C(\partial \Omega_n)}
$$
which yields the desired estimate in (\ref{uniformConvergence}).
Concerning the positive sign of the Jacobian determinant of $\,F\,$, we shall appeal to a $\,p$-harmonic variant of Hurwitz Theorem~\cite[Theorem 4.9]{IOw=s}; that is,

\begin{theorem}
If a sequence $\,F_n  : \Omega \rightarrow \mathbb R^2\,$ of $\,p$-harmonic (coordinate-wise) orientation preserving diffeomorphisms converges uniformly to $\,F  : \Omega \rightarrow \mathbb R^2\,$ in a domain $\,\Omega \subset \mathbb R^2\,$ ,  then either $\,J(x, F) > 0\,$ everywhere in $\,\Omega\,$ or  $\,J(x, F) \equiv 0\,$ in $\,\Omega\,$.
\end{theorem}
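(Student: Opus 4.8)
The plan is to split the argument into a soft convergence step that produces $J(\cdot,F)\ge 0$ and a rigidity step that produces the dichotomy; the approximating diffeomorphisms re-enter essentially in the second step. First I would promote the uniform convergence $F_n\to F$ to convergence in $C^1$ on compact subsets. Writing $F_n=(u_n,v_n)$, the interior $C^{1,\alpha}$-estimates for the $p$-harmonic equation bound the $C^{1,\alpha}$-norm of $u_n$ on each $K\Subset\Omega$ by the sup-norm of $u_n$ on a slightly larger set; since $\{u_n\}$ and $\{v_n\}$ are locally uniformly bounded, the families $\{\nabla u_n\}$ and $\{\nabla v_n\}$ are precompact in $C(K)$ and can cluster only at $\nabla u,\nabla v$. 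Hence $DF_n\to DF$ locally uniformly, the limit $F=(u,v)$ is again coordinate-wise $p$-harmonic, and $J(\cdot,F_n)\to J(\cdot,F)$ locally uniformly. Because each $J(\cdot,F_n)>0$, the limit obeys $J(\cdot,F)\ge 0$ throughout $\Omega$. It then remains to upgrade $J\ge 0$ to the dichotomy.

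For the model case $p=2$ the mechanism is transparent. Regarding $F=u+iv$ as complex valued, one has $J(\cdot,F)=|F_z|^2-|F_{\bar z}|^2$, and when $u,v$ are harmonic both $F_z=u_z+iv_z$ and $\overline{F_{\bar z}}=u_z-iv_z$ are holomorphic. The orientation condition $J(\cdot,F_n)>0$ forces $|(F_n)_z|>|(F_n)_{\bar z}|\ge 0$, so each $(F_n)_z$ is a \emph{non-vanishing} holomorphic function, and by the classical Hurwitz theorem its locally uniform limit $F_z$ is either non-vanishing or identically zero. If $F_z\equiv 0$ then $J(\cdot,F)=-|F_{\bar z}|^2\le 0$, which with $J\ge 0$ gives $J\equiv 0$. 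If $F_z$ never vanishes, then $g:=\overline{F_{\bar z}}/F_z$ is holomorphic with $|g|=|F_{\bar z}|/|F_z|$ and $J(\cdot,F)=|F_z|^2\,(1-|g|^2)$; the diffeomorphisms give $|g_n|<1$, hence $|g|\le 1$, and the maximum modulus principle yields either $|g|<1$ everywhere (so $J>0$ everywhere) or $|g|\equiv 1$ (so $J\equiv 0$). Thus the zeros of $J$ are of exactly two kinds—the branch points where $F_z=0$, eliminated by Hurwitz, and the fold points where $|F_{\bar z}|=|F_z|\neq 0$, eliminated by the maximum principle—and both are all-or-nothing.

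For general $1<p<\infty$ I would pursue the same two-part scheme, replacing holomorphy by the quasiregularity of the complex gradients $u_z,v_z$ of planar $p$-harmonic functions from the Bojarski--Iwaniec theory: each gradient solves a Beltrami-type equation with dilatation controlled by $p$, hence is discrete and open, with isolated zeros and a Stoilow factorization, so that the critical set $\{DF=0\}=\{F_z=0\}$ (the two sets agree once $J\ge 0$) is discrete. The hard part will be precisely to reproduce the Hurwitz step and the maximum-principle step in this setting. Unlike the case $p=2$, the sum $F_z=u_z+iv_z$ and the ratio $g=\overline{F_{\bar z}}/F_z$ combine the two \emph{different}, solution-dependent Beltrami structures of $u_z$ and $v_z$, so neither is quasiregular and there is no common quasiconformal chart making them holomorphic; one therefore cannot invoke Hurwitz or the maximum modulus principle verbatim. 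The crux is to show directly, from these two Beltrami structures, that a branch-type zero of $J$ (where $F_z=0$) raises the local topological degree of $F$ above one—contradicting degree stability under uniform convergence from the univalent $F_n$—while a fold-type zero (where $|F_{\bar z}|=|F_z|\neq 0$) can persist only on all of $\Omega$, i.e. only when $J\equiv 0$. Establishing these two rigidity statements for coordinate-wise $p$-harmonic maps is where essentially all the work lies.
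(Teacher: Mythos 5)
This statement is not proved in the paper at all: it is imported verbatim from the companion paper \cite[Theorem 4.9]{IOw=s}, so the honest comparison is between your attempt and that external proof, not an internal argument. With that said, your first step is correct and is exactly the device the paper itself uses immediately after invoking the theorem: interior $\,\mathscr C^{1,\alpha}$-estimates for $p$-harmonic functions \cite{IMa, Uh, Ur} upgrade uniform convergence to $\,DF_n \to DF\,$ locally uniformly, whence $\,J(\cdot,F)\geqslant 0\,$. Your $p=2$ argument is also complete and correct: since $\,F_z=u_z+iv_z\,$ and $\,\overline{F_{\bar z}}=u_z-iv_z\,$ are holomorphic, the classical Hurwitz theorem applied to the non-vanishing functions $\,(F_n)_z\,$, followed by the maximum modulus principle applied to the second dilatation $\,g=\overline{F_{\bar z}}/F_z\,$, gives precisely the dichotomy. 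This is the classical harmonic-mappings argument, and it is a genuinely self-contained route for that exponent.

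The gap is that the theorem is stated, and is needed by the paper, for every $\,1<p<\infty\,$: Lemma~\ref{pHarmonicReplacement}, and through it Theorem~\ref{maintheorem}, rely on it across the full range of exponents, not just $p=2$. For $\,p\neq 2\,$ what you offer is a program, not a proof, and you say so yourself: the two rigidity claims you isolate --- (i) a branch-type zero ($F_z=0$) forces local topological degree at least $2$, contradicting degree stability under uniform convergence from the injective $F_n$; (ii) a fold-type zero ($|F_{\bar z}|=|F_z|\neq 0$) can only occur when $\,J\equiv 0\,$ --- are jointly equivalent to the theorem, and neither is established. Nor is either routine. At a branch point, the discreteness of $\,\{F_z=0\}\,$ (which you correctly extract from the Bojarski--Iwaniec quasiregularity of $\,u_z\,$ and $\,v_z\,$ separately) does not by itself yield local degree $\geqslant 2$: the map $\,F\,$ itself admits no Stoilow factorization and no argument principle, precisely because, as you note, $\,u_z\,$ and $\,v_z\,$ solve two different solution-dependent Beltrami equations, so one needs genuine local structure theory for the pair. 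Claim (ii) is a global unique-continuation statement for which the maximum-principle surrogate is exactly what is missing when $\,p\neq 2\,$; a priori the fold set $\,\{J=0,\, DF\neq 0\}\,$ could be a curve, and ruling that out is where the substance lies. This is the content of the real work in \cite{AS} and \cite{IOw=s}. In short: your proposal is correct and complete for $\,p=2\,$, but for the case the paper actually needs, the two hard steps are named rather than proved.
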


Let us first exclude a  possibility that  $\,J(x, F) \equiv 0\,$ in $\,\Omega\,$. For this, we choose and fix a nonnegative test function $\,\eta \in \mathscr C^\infty_\circ(\Delta)\,$ whose integral mean equals $\,1\,$.  Let $\,\mathbb G \Subset \Delta\,$ denote the support of $\,\eta\,$. Since $\, F_n \rightrightarrows F\,$ (uniformly) and $\, F(\partial \Omega)  = \partial \Delta\,$ it follows that
$$
\bigcup_{n\geqslant 1} F_n^{-1}(\mathbb G) \; \Subset \Omega\;. \;\;\textnormal{In particular,} \;\;\bigcup_{n\geqslant 1} F_n^{-1}(\mathbb G) \;\; \; \Subset \Omega_k \Subset \Omega\,,\;\;\textnormal{for sufficiently large} \, k
$$
Hence, for $\,n> k\,$,  we have
$$
\int _{\Omega_k} \eta(F_n(x)) J(x,F_n) \,\textnormal{d} x = \int_{F_n(\Omega_k)} \eta(y)\,\textnormal{d} y \,\geqslant  \int_{\mathbb G} \eta(y)\,\textnormal{d} y \; = 1\;.
$$
Recall that $\, F_n \rightrightarrows F\,$ on $\,\Omega_{k+1}\Supset \Omega_k\,$. Since $\, F_n\,$ are $\,p$-harmonic in $\,\Omega_{k+1}\,$, we also have $\, DF_n \rightrightarrows DF\,$ on $\,\Omega_{k}\,$. This follows from  local  $\,\mathscr C^{1,\alpha}$-estimates of $\,p$-harmonic functions~\cite{IMa, Uh, Ur}. It is now legitimate to pass to the limit in the above estimate to obtain $ \int _{\Omega_k} \eta(F(x)) J(x,F) \,\textnormal{d} x \geqslant 1\,$. In particular, $\, J(x, F) \not\equiv 0\,$ in $\,\Omega_k\,$. Thus $\,J(x, F) > 0\,$ everywhere in $\,\Omega_k\,$. But $\,k\,$ can be as large as we wish, so $\,J(x, F) > 0\,$ everywhere in $\,\Omega\,$. \\
In particular, $\,F\,$ is a local diffeomorphism in $\,\Omega\,$. On the other hand the  map $\, F : \Omega \onto \Delta\,$ is a uniform limit of homeomorphisms. Therefore, $\,F\,$ is a global diffeomorphism, completing the proof of Lemma \ref{pHarmonicReplacement}.
\end{proof}
\section{Approximation inside a given pre-cell}
 From now on $\,\mathbb Y\,$ is a Lipschitz domain.
 This means that every point $\,y_\circ \in \partial \mathbb Y\,$ has a neighborhood $\,\mathcal O \subset \mathscr Y\,$ and a local ($\,\mathscr C^1$ -smooth) chart $\,\kappa : \mathcal O \rightarrow \mathbb R^2\,$ such that $\,\kappa (\mathcal O \cap \partial \mathbb Y)\,$ is a graph of a Lipschitz function. Regarding the given map $ h \in \mathscr C(\overline{\mathbb X}, \overline{\mathbb Y}) \cap \mathscr W^{1,p}(\mathbb X, \mathbb R^3)\;,\; 1< p < \infty $, we recall that it extends as a monotone map $\,h : \mathscr X \onto \mathscr Y\,$. No regularity outside $\,\overline{\mathbb X}\,$ is required, as this  extension will assist us only in the topological aspects of the proof.\\

As a  preliminary  step in the  proof of Theorem \ref{maintheorem} we shall  approximate  $\,h : \overline{\mathbb X} \onto \overline{\mathbb Y}\,$ with Sobolev mappings which are univalent  within a given pre-cell, and  remain unchanged outside the pre-cell. The approximation is understood by means of the metric in the \textit{Royden space} $\,\mathscr R^{\;\!p}(\mathbb X, \mathbb Y) = \mathscr C(\overline{\mathbb X}, \overline{\mathbb Y}) \cap \mathscr W^{1,p}(\mathbb X, \mathbb R^3)\,$; that is, uniformly and strongly in $\,\mathscr W^{1,p}(\mathbb X, \mathbb R^3)\,$. Precisely, we have:

 \begin{proposition}\label{ApproximationInPrecell}
 Let $\,\mathcal Q \subset \mathbb Y\,$ be a cell in $\,\mathbb Y\,$, which we assume to be Lipschitz regular, and $\,\mathcal U\,$  be its pre-cell in $\mathbb X\,$. Then there exists a sequence of monotone mappings $\,h_j : \overline{\mathbb X} \onto \overline{\mathbb Y}\,$ such that:
  \begin{itemize}
  \item[(a)] $\,h_j \in  \mathscr C(\overline{\mathbb X}, \overline{\mathbb Y}) \cap \mathscr W^{1,p}(\mathbb X, \mathbb R^3)\,,\;\;\; j = 1,2, ...\;,$
      \item[(b)] $\,h_j = h \,\;\textnormal{on} \;\; \overline{\mathbb X} \setminus \mathcal U\,,$
      \item[(c)] $\, h_j : \mathcal U \onto \mathcal Q\,\; \textnormal{are homeomorphisms} ,$
      \item[(d)] $\, h_j \rightrightarrows h\,\, \textnormal{uniformly in }\, \overline{\mathbb X}\,,$
      \item[(e)] $\,h_j \rightarrow h\,\, \textnormal{strongly in } \, \;\mathscr W^{1,p}(\mathbb X , \mathbb R^3)\,,$
  \end{itemize}
  \end{proposition}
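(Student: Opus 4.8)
The plan is to manufacture the homeomorphisms by a coordinate-wise $p$-harmonic replacement inside the pre-cell, and then to extract strong $\mathscr W^{1,p}$ convergence from a matching energy bound rather than by estimating $h_j - h$ directly. First I would reduce to a convex target. Since $\mathcal Q$ is a Lipschitz Jordan domain, the uniformization theorem furnishes a conformal map $\Phi$ of $\mathcal Q$ onto the unit disk $\mathbb{D}$, and the Lipschitz regularity guarantees that $\Phi$ and $\Phi^{-1}$ are regular enough (H\"older up to the boundary, with controlled behaviour on compact pieces) for uniform and $\mathscr W^{1,p}$ convergence to be transferred back and forth. Replacing $h$ by $\tilde h = \Phi \circ h \colon \mathcal U \onto \mathbb{D}$ I may thus assume the target cell is convex; $\tilde h$ is still monotone with finite energy, and its pre-cell is simply connected by Lemma \ref{Precells}. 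Lemma \ref{Homeomorphic replacement} provides a monotone map that is a homeomorphism of $\mathcal U$ onto the convexified cell and agrees with $\tilde h$ on $\partial \mathcal U$; this supplies exactly the homeomorphic boundary data $\Psi$ demanded by the Rad\'{o}--Kneser--Choquet type Lemma \ref{pHarmonicReplacement}.

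For the construction itself I would exhaust the convex target by convex subdomains $\Delta_1 \Subset \Delta_2 \Subset \cdots$ with $\bigcup_k \Delta_k = \mathbb{D}$, set $\mathcal U_k = \tilde h^{-1}(\Delta_k) \nearrow \mathcal U$, and on each simply connected $\mathcal U_k$ take the coordinate-wise $p$-harmonic extension $F_k$ of the boundary trace $\tilde h|_{\partial \mathcal U_k}$. Since that trace is monotone onto $\partial \Delta_k$, hence extends to a homeomorphism onto the convex $\Delta_k$ by Lemma \ref{HomeomorphicExtensionInsideCells}, Lemma \ref{pHarmonicReplacement} makes $F_k \colon \mathcal U_k \onto \Delta_k$ a diffeomorphism, and by energy minimality $\mathscr E_{\mathcal U_k}[F_k] \le \mathscr E_{\mathcal U_k}[\tilde h]$. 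Across the thin target-collar $\mathbb{D} \setminus \overline{\Delta_k}$ I would glue $F_k$ to the homeomorphic model $\Psi$ by a homeomorphism of the annular preimage, arranged to match $F_k$ on $\partial \mathcal U_k$ and $\tilde h$ on $\partial \mathcal U$; the resulting $\tilde h_k$ is a homeomorphism of $\mathcal U$ onto $\mathbb{D}$, equals $\tilde h$ on $\partial \mathcal U$, and is monotone on $\overline{\mathbb X}$ after extension by $\tilde h$. Pulling back by $\Phi^{-1}$ yields the candidate $h_j$, settling (a), (b), (c).

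The hard part will be (d)--(e): guaranteeing that these injective maps return to the \emph{non-injective} $h$ rather than drifting to an energy minimizer. The energy upper bound is the manageable half --- minimality on the bulk gives $\mathscr E_{\mathcal U_k}[F_k] \le \mathscr E_{\mathcal U_k}[\tilde h]$, and since $\tilde h$ has finite energy the collar contribution $\mathscr E_{\tilde h^{-1}(\mathbb{D} \setminus \Delta_k)}[\tilde h]$ tends to $0$, so one expects $\limsup_k \mathscr E[\tilde h_k] \le \mathscr E[\tilde h]$ provided the glued collar map is built with energy comparable to that of $\tilde h$ on the vanishing preimage of the collar. The genuinely delicate point is uniform convergence: a full $p$-harmonic replacement on the bulk does \emph{not} converge to $\tilde h$, so the construction must be organized so that the essential modification is confined to the collar while the bulk stays $\mathscr C^0$-close to $\tilde h$. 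Identifying the uniform limit as $\tilde h$, and not merely as some competitor sharing the same boundary values, is where the monotone structure, the maximum principle for the coordinate functions, and Youngs' Theorem \ref{HomeomorphicApproximation} must be combined. This reconciliation of global injectivity with $\mathscr W^{1,p}$-proximity to a map that collapses continua is the technical heart of the proposition.

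Once uniform convergence $\tilde h_k \rightrightarrows \tilde h$ and the bound $\limsup_k \mathscr E[\tilde h_k] \le \mathscr E[\tilde h]$ are in hand, the strong convergence is automatic and I would finish abstractly. Uniform convergence together with the uniform energy bound forces $\tilde h_k \rightharpoonup \tilde h$ weakly in $\mathscr W^{1,p}$; weak lower semicontinuity of the $p$-energy then gives $\mathscr E[\tilde h] \le \liminf_k \mathscr E[\tilde h_k] \le \limsup_k \mathscr E[\tilde h_k] \le \mathscr E[\tilde h]$, hence $\mathscr E[\tilde h_k] \to \mathscr E[\tilde h]$. Because the integrand is uniformly convex for $1 < p < \infty$, weak convergence together with convergence of the energies upgrades to strong convergence, and transferring through the fixed map $\Phi^{-1}$ delivers (d) and (e) for $h_j$.
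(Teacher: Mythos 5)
Your proposal has a genuine gap at exactly the point you yourself flag as ``the technical heart'': nothing in your construction makes the approximating homeomorphisms converge to $h$ rather than to an energy minimizer. If on the bulk $\mathcal U_k = \tilde h^{-1}(\Delta_k)$ you take the coordinate-wise $p$-harmonic extension of $\tilde h|_{\partial \mathcal U_k}$, then as $k \to \infty$ these maps converge to the $p$-harmonic extension of the boundary values of $\tilde h$ on $\partial\mathcal U$ --- generically a different map from $\tilde h$, since $\tilde h$ need not be $p$-harmonic inside $\mathcal U$. Conversely, if you arrange the bulk to stay $\mathscr C^0$-close to $\tilde h$, you cannot have injectivity there, because $\tilde h$ collapses continua. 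You acknowledge this tension, but ``the monotone structure, the maximum principle\dots and Youngs' Theorem must be combined'' is not an argument; no mechanism is given. A second, related gap is the collar gluing: you need a homeomorphism of $\mathcal U \setminus \overline{\mathcal U_k}$ onto $\mathbb D \setminus \overline{\Delta_k}$ matching $F_k$ on the inner boundary and $\tilde h$ (which is \emph{not} injective) on $\partial \mathcal U$, with energy tending to zero; constructing Sobolev-controlled homeomorphic interpolations of prescribed boundary data is a problem of essentially the same difficulty as the proposition itself. A minor further issue: reducing to a convex target by a conformal map $\Phi$ is unsafe, since composition with $\Phi$ need not act boundedly (let alone continuously) on $\mathscr W^{1,p}$ for $p \neq 2$ when $\Phi'$ is unbounded near a Lipschitz corner; the paper instead uses special bi-Lipschitz normalizations from \cite{IOw=s} whose composition operators are continuous.

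The paper escapes the injectivity-versus-proximity dilemma by a different decomposition, which is the idea missing from your proposal. After normalizing $\mathcal Q$ to the unit square, it covers $\mathcal Q$ by finitely many overlapping cells $\mathcal Q_1, \dots, \mathcal Q_N$ of diameter less than $\varepsilon$, with every point of $\mathcal Q$ lying in at most three of the closed cells, and runs a \emph{chain} of full $p$-harmonic replacements, one per small pre-cell $\mathcal U_i$: Lemma \ref{Homeomorphic replacement} supplies the homeomorphic comparison data, and Lemmas \ref{DirichletProblem} and \ref{pHarmonicReplacement} perform the replacement. Uniform proximity to $h$ then costs nothing: on $\mathcal U_i$ both $f_i$ and $f_{i+1}$ take values in $\overline{\mathcal Q_i}$, so $|f_{i+1}-f_i| \le \varepsilon$ there and vanishes elsewhere, and the bounded overlap gives $|f_{N+1}-h| \le 3\varepsilon$ regardless of how large the pre-cells are. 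Meanwhile injectivity accumulates along the chain --- each step gains local injectivity on $\mathcal U_i$ without destroying it on earlier pre-cells --- and a locally injective monotone map $\,f_{N+1} : \mathcal U \onto \mathcal Q\,$ is a homeomorphism. Your final paragraph (energy monotonicity, weak convergence, lower semicontinuity, uniform convexity) does coincide with the paper's concluding step and is correct, but only once a construction delivering both (c) and (d) simultaneously is in place.
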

 \begin{remark}Upon the extension $\,h_j \bydef h : \mathscr X \setminus \mathbb X \onto \mathscr Y \setminus \mathbb Y\,$, we obtain monotone mappings between reference manifolds, again denoted by $\,h_j : \mathscr X \onto \mathscr Y\,$.
 \end{remark}

 \begin{proof} It will takes 5 steps to complete the proof of Proposition \ref{ApproximationInPrecell}.

\textbf{\textit{Step 1}}. \textit{Reduction to simply connected Jordan domains in $\,\mathbb S^2\,$}.  We  view $\,\mathbb S^2\,$ as the extended complex plane $\,\widehat{\mathbb C} = \mathbb C \cup\{\infty\} \simeq \widehat{\mathbb R^2}\,$ equipped with the chordal metric.
Choose and fix a simply connected neighborhood $\, \mathcal Q\,' \Supset \overline{\mathcal Q}\,$ such that $\,\mathcal Q' \cap \partial \mathbb Y\,$ becomes a Lipschitz regular Jordan arc. Obviously  such $\mathcal Q'$ does exist. Thus $\, \mathcal Q\,'\cap \mathbb Y\,$ is a boundary cell in $\,\mathbb Y\,$, even if $\,\mathcal Q\,$ was an internal cell. Denote by  $\,\mathcal U\,' = h^{-1}(\mathcal Q \,') \subset \mathscr X\,$. Since $\,h\,$ is monotone,  $\,\mathcal U\, ' \,$ is a simply connected neighborhood of the continuum $\,h^{-1}(\overline{\mathcal Q}) \Subset \mathcal U\,'\,$ and $\, h :  \partial \mathcal U\,' \rightarrow \partial \mathcal Q\,'\,$. Take a look at the
commutative diagram
\begin{eqnarray}
\mathcal U\,' \!\!\!\!& -\!\!\!-\!\!\!-\!\!\!- h -\!\!\!-\!\!\!\!\rightarrow &\!\!\!\!\mathcal Q\,'\nonumber\\
\,\mid          &\;\;\;\;\;\;\;& \!\!\mid\nonumber \\
   \phi &\;\;\;\;\;\;\;\;\; & \!\!\!\psi \;\;\nonumber\\
   \downarrow & \;\;\;\;\;\;\;\; & \!\!\downarrow \;\;\nonumber\\
   \mathbb R^2 \!\!\!\!&  -\!\!\!-\!\!\!-\!\!\!- \widetilde{h} -\!\!\!-\!\!\!\!\rightarrow & \!\!\!\!\mathbb R^2 \nonumber
\end{eqnarray}
where $\,\phi : \mathcal U\,' \onto \mathbb R^2\,$ and $\,\psi : \mathcal Q\,' \onto \mathbb R^2\,$ are $\,\mathscr C^1\,$-diffeomorphisms. For the existence of such diffeomorphisms one may appeal to the uniformization theorem \cite{Poincare} and \cite{Koebe1, Koebe2, Koebe3}. It asserts that every simply connected Riemann surface is conformally equivalent to either open unit disk, the complex plane or the standard sphere; thus in our case, $\,\mathscr C^1$ -diffeomorphic to $\,\mathbb R^2\,$. Note that $\,\widetilde{h} = \psi\circ h\circ \phi^{-1}  :\mathbb R^2 \onto \mathbb R^2\,$ extends continuously  to the one-point compactification of the plane; that is, to a monotone mapping of the Riemann sphere onto itself, still  denoted by $\,\widetilde{h} : \widehat{\mathbb C} \onto \widehat{\mathbb C}\;,\; \widetilde{h}(\infty) = \infty\,$. With the aid of stereographic projections we  move the sets $\,\mathbb X \cap \varphi (\mathcal U\,')\,$ and $\,\mathbb Y\cap \psi(\mathcal Q\,')\,$ away from $\,\infty\,$.
Now the proof of Proposition \ref{ApproximationInPrecell} reduces to the case in which
 \begin{eqnarray}
 &&\lefteqn{\mathscr X =  \widehat{\mathbb C} \;, \;\mathscr Y = \widehat{\mathbb C} \;}\nonumber \\
  &&\mathbb X , \mathbb Y \subset \mathbb C\,\;\textnormal{are bounded simply connected Jordan domains,} \;\nonumber\\
    && \mathbb Y\, \textnormal{and the cell} \; \mathcal Q \subset \mathbb Y \,\textnormal{are Lipschitz domains.}\nonumber
 \end{eqnarray}

 \textbf{\textit{Step 2}}. \textit{Further reduction}. We shall need $\,\mathcal Q \subset \mathbb Y\,$  to be convex; for instance, the unit square. For this, since  $\,\mathcal Q \subset \mathbb R^2\,$ is  Lipschitz domain, one might try to use a bi-Lipschitz transformation $\,F : \mathbb R^2 \onto \mathbb R^2\,$, such that $\, F : \mathcal Q  \onto (0, 1) \times(0, 1)\,$.

\begin{remark} Although the existence of such $\,F\,$ poses no problem, a caution must be exercised.  Every bi-Lipschitz map  $\,F : \mathbb A \onto \mathbb B\,$ between bounded planar domains and its inverse $\,F^{-1} : \mathbb B \onto \mathbb A\,$ induce  bounded (nonlinear) composition operators:
$$
F_\sharp  : \,\mathscr W^{1,p}(\Omega, \mathbb A ) \,\rightarrow \,\mathscr W^{1,p}(\Omega, \mathbb B )\;,\;\;\textnormal{by the rule}\;\; F_\sharp(g) \deff F\circ g \;,
$$
$$
F^{-1}_\sharp  : \,\mathscr W^{1,p}(\Omega, \mathbb B ) \,\rightarrow \,\mathscr W^{1,p}(\Omega, \mathbb A )\;,\;\; \;\textnormal{by the rule}\;\; F^{-1}_\sharp(f) \deff F^{-1}\circ f \;,
$$
whatever the domain $\,\Omega \subset \mathbb R^2\,$ is. But in general the continuity of these operators is  questionable \cite{Ha1}. Fortunately, there is a satisfactory solution to this puzzle. For Lipschitz domains such as $\, \mathbb A =\mathcal Q\,$ and $\, \mathbb B = (0, 1) \times (0, 1)\,$  one can construct special  bi-Lipschitz transformation $\,F : \mathbb R^2 \onto \mathbb R^2\,$,  $\,F : \mathbb A\onto\mathbb B\,$, for  which the induced composition operators $\,F_\sharp\,$ and $\, F^{-1}_\sharp\,$ are indeed continuous. Actual construction of such $\,F\,$ is presented in \cite{IOw=s}.
\end{remark}
Thus we may assume that $\,\mathcal Q = (0, 1) \times (0, 1) \subset \mathbb Y \,$. Furthermore, in case of a boundary cell, we  assume that its external face $\,\overline{\mathcal Q} \cap \partial \mathbb Y\,$ equals $\, \{(x, 0)\,,   \;  0 \leqslant x \leqslant 1\,\}\,$.\\

\textbf{\textit{Step 3}}. \textit{Covering by small cells}. Given $\,\varepsilon >0\,$, we cover $\,\mathcal Q\,$ by the family of overlapping open squares $\,\mathcal Q_1 , \mathcal Q_2, ... , \mathcal Q_N\,$ of diameter less then $\,\varepsilon\,$ in which every point in $\,\mathcal Q\,$ belongs to at most three of the closed squares in this family. In mathematical terms,\\

\begin{itemize}
\item $\,\mathcal Q = \,\mathcal Q_1 \cup \mathcal Q_2 \cup ... \cup \mathcal Q_N\,$,\;\;\;\;\;$\; N = N(\varepsilon)\,$\vskip0.1cm
\item  $\,\textnormal{diam} \,\mathcal Q_i \; < \varepsilon\;,\;\;\;\;\;\;\;\;\;\;\;\;\;\;\;\;\;\;\;\;\;\;\;\; i = 1,2,..., N \,$\vskip0.1cm
\item  $\, 1 \leqslant \sum_{i=1}^N \textnormal{\Large $\chi$}_{\overline{\mathcal Q}_{\,i}}(z)\; \leqslant 3\;, \;\;\;\;\;\;\;\;\;\textnormal{for} \;z \in \mathcal Q \,$\vskip0.1cm
\item Each $\,\mathcal Q_i\,$ is either internal or a boundary cell for $\,\mathbb Y\,$.
\end{itemize}
Construction of such a cover poses no difficulty. We now consider the corresponding pre-cells in $\, \mathcal  U_i = h^{-1} (\mathcal Q_i) \subset \mathcal U \subset \mathbb X\,$.
\begin{itemize}
\item $\,\mathcal U = \,\mathcal U_1 \cup \mathcal U_2 \cup ... \cup \mathcal U_N\,$,\;\;\;\;\;$\; \;\,\;N = N(\varepsilon)\,$\vskip0.1cm
\item  $\, 1 \leqslant \sum_{i=1}^N \textnormal{\Large $\chi$}_{_{\mathcal U_i}}(x)\; \leqslant 3\;, \;\;\;\;\;\;\;\;\;\textnormal{for} \;x \in \mathcal U \,$\vskip0.1cm
    \item Each $\,\mathcal U_{\,i}\,$ is either internal or a boundary pre-cell in $\,\mathbb X\,$.
\end{itemize}
It may be worth pointing out that the pre-cells $\,\mathcal U_i\,$ can remain very large in diameter as $\,\varepsilon\,$ approaches zero. Typically this occurs when  a continuum collapses into a point.\\

\textbf{\textit{Step 4}}. \textit{A chain of $\,p$-harmonic replacements}. We are going to construct by induction a chain $\,f_1\rightsquigarrow f_2\rightsquigarrow... \rightsquigarrow f_N \rightsquigarrow f_{N+1}\,$  of  mappings $\, f_i : \widehat{\mathbb C} \onto \widehat{\mathbb C}\,$. We set  $\,f_1 = h\,$. For the induction step suppose we are given a monotone map $\,f_i  : \overline{\mathbb X} \onto \overline{\mathbb Y}\,$ of Sobolev class $\,\mathscr W^{1,p} (\mathbb X, \mathbb Y)\,$  and its monotone extension $\,f_i  : \widehat{\mathbb C} \onto \widehat{\mathbb C}\,$. Consider the cell $\,\mathcal Q_i \subset \mathcal Q \subset \mathbb Y\,$ (internal or boundary) and the corresponding  pre-cell $\,\mathcal U_{\,i} \subset \mathcal U \subset \mathbb X\,$ for the mapping  $\,f_i\,$. By Lemma \ref{Homeomorphic replacement}, there exists a monotone map $\,\Psi_i  : \overline{\mathbb C} \onto \overline{\mathbb C} \,$ such that $\, \Psi_i : \mathcal U_{\,i} \onto \mathcal Q_i\,$ is a homeomorphism and $\, \Psi_i \equiv f_i : \widehat{\mathbb C} \setminus \mathcal U_{\,i} \onto \widehat{\mathbb C} \setminus \mathcal Q_i\,$. Then, with the aid of  Lemmas  \ref{DirichletProblem}  and \ref{pHarmonicReplacement}, we can modify $\,f_i\,$ within the pre-cell $\,\mathcal U_{\,i}\,$ to obtain,
\begin{displaymath}
f_{i+1} = \left\{ \begin{array}{ll}
f_i &\textrm{in $\,\widehat{\mathbb C} \setminus \mathcal U_{\,i}$} \\
\textrm{{$\,p\,$}-harmonic of class $\,f_{i} + \mathscr W^{1,p}_\circ(\mathcal U_{\,i})\,$} & \textrm{in  $\,\mathcal U_{\,i}\,$}
\end{array} \right.
\end{displaymath}
 We emphasize (in view of Lemmas \ref{Homeomorphic replacement} and \ref{pHarmonicReplacement})  that $\, f_{i+1}\,$ takes $\,\mathcal U_{\,i}\,$ homeomorphically onto $\,\mathcal Q_i\,$, whereas under the map $\,f_i\,$ some points in $\,\mathcal U_{\,i}\,$ may collapse into $\,\partial\mathcal Q_i\,$.\\
Here are the essential properties of these mappings. It may be worth reminding the reader that we are dealing with rather weird domains.
\begin{itemize}
\item Each $\,f_i\,$ belongs to the Sobolev class $\,\mathscr W^{1,p}(\mathbb X, \mathbb Y)\,$. This is due to Lemma \ref{DirichletProblem} which yields $\, f_{i+1} - f_i \in \,\mathscr W^{1,p}_\circ(\mathcal U _{\,i}, \mathbb C)\,$.\\
    \item The energies are nonincreasing;  \;\;$\,\mathscr E_\mathbb X [f_i] \leqslant \mathscr E_\mathbb X [f_{i+1}] \leqslant \, ....\; \leqslant \mathscr E_\mathbb X [h] \,$.\\
        \item  Each $\,f_{i+1}\,$ is locally injective in $\,\mathcal U_{\,1} \cup \mathcal U_{\,2} \cup ... \cup \mathcal U_{\,i}\,$ (no branch points). This is because when making the $\,p$ -harmonic replacement of $\,f_i\,$ we gained  new points of local injectivity for $\,f_{i+1}\,$. These are all points in $\,\mathcal U_{\,i}\,$. At the same time we did not loose local injectivity at the points where $\,f_i\,$ was already injective. \\
            \item $\, f_{N+1}  : \mathcal U \onto \mathcal Q\,$ is a homeomorphism, because it is a local homeomorphism  and the preimage of any point is a continuum in $\,\overline{\mathbb X}\,$.\\

                \item  For each $\,z \in \mathcal U\,$, we have $\,|f_{N+1}(z) \, - h(z) \,| \,\leqslant 3\,\varepsilon\,$. Indeed, by triangle inequality,
                    $$ \,|f_{N+1}(z) \, - h(z) \,| \,\leqslant  \sum_{i= 1}^N  |f_{i+1}(z) \, - f_i(z) \,|$$
                    Let us take a quick look at each term $\,|f_{i+1}(z) \, - f_i(z) \,|\,$.  If $\,z \in \mathcal U_{\,i}\,$ then both $\, f_{i+1}\,$ and $\,f_i\,$ lie in $\,\overline{\mathcal Q_i}\,$; therefore, $\,|f_{i+1}(z) \, - f_i(z) \,| \leqslant \textnormal{diam}\,  \mathcal Q_ i \leqslant \varepsilon\,$. This term vanishes if $\,z \not\in \mathcal U_{\,i}\,$. But, for a given point $\,z\,$ there can be at most three pre-cells containing $\,z\,$. In other words the above sum consist of at most three nonzero terms, each of which does not exceed $\,\varepsilon\,$.
                    \end{itemize}
\textbf{\textit{Step 5}}. \textit{Letting $\,\varepsilon \,$ small}\,. We are now ready to proceed to the final construction of the mappings $\,h_j : \overline{\mathbb X} \onto \overline{\mathbb Y}\,$,

\begin{displaymath}
h_j  = \left\{ \begin{array}{ll}
f_{N+1}(z)\,, \; \textnormal{where}\;\;  N=N(\varepsilon)\,,\; \varepsilon =  1/j\,\; & \textrm{if $ \,z \in \mathcal U\,$}\\
$\,$\\
h(z) & \textrm{if $\,z \in \overline{\mathbb X} \setminus \mathcal U\,$}
\end{array} \right.
\end{displaymath}
Obviously, we have $\,| h_j(z) \, - h(z) \,| \,\leqslant  3/j\,$ everywhere in $\,\overline{\mathbb X}\,$. Hence $\, h_j \rightrightarrows h\,$\, uniformly in  $\, \overline{\mathbb X}\,$. To complete the proof of Proposition \ref{ApproximationInPrecell}, we need only verify that   $\,h_j \rightarrow h\,\, \textnormal{strongly in } \, \;\mathscr W^{1,p}(\mathbb X , \mathbb R^2)\,$. The crucial ingredient is that $\,\mathscr E_\mathbb X [h_j] \leqslant \,\mathscr E_\mathbb X [h]\,$, for all $\,j = 1,2, ...\;.$ In particular, $\,h_j \,$ converge to $\, h\,  \textnormal{weakly in } \, \;\mathscr W^{1,p}(\mathbb X , \mathbb R^2)\,$. Now the lower semicontinuity of the energy functional yields  $\,\mathscr E_\mathbb X [h] \leqslant \,\liminf\mathscr E_\mathbb X [h_j] \leqslant \,\liminf\mathscr E_\mathbb X [h] = \mathscr E_\mathbb X [h]\,$. This in turn implies, by uniform convexity arguments, that $\,h_j\,$ converge to $h$ strongly in $\,\mathscr W^{1,p}(\mathbb X, \mathbb R^2)\,$.
\end{proof}
\begin{remark}
Here is a careful look at the uniform convexity arguments. Consider the coordinate functions for $\, h_j = u_j + i\, v_j\,$ and $\,h = u + i\, v\,$. In view of lower semicontinuity, we have  $\,\int |\nabla u |^p \leqslant  \liminf \int |\nabla u_j |^p \,$ and $\,\int |\nabla v |^p \leqslant  \liminf \int |\nabla v_j |^p \,$. Adding these inequalities, we obtain  $\,\int \big(|\nabla u |^p + |\nabla v |^p\big) \leqslant \liminf \int \big(|\nabla u_j |^p + |\nabla v_j |^p\big) = \liminf \mathscr E[h_j]  \leqslant \mathscr E[h] = \int \big(|\nabla u |^p + |\nabla v |^p\big)\,$, which is possible only when $\,\int |\nabla u |^p =  \liminf \int |\nabla u_j |^p \,$ and $\,\int |\nabla v |^p =  \liminf \int |\nabla v_j |^p \,$. Now we see that $\,\nabla u_j\,$ and $\,\nabla v_j\,$ converge strongly in $\,\mathscr L^p(\mathbb X, \mathbb R^2)\,$, because the usual normed space $\,\mathscr L^p(\mathbb X, \mathbb R^2)\,$ is uniformly convex, by Clarkson's Inequality for vectors in the Euclidean space $\,\mathbb R^2\,$, see ~\cite{Cl}.
Actually, one could apply  Theorem 2 in \cite{Day}  to infer  that the Banach space \,$L^p(\mathbb X, \mathbb R^2) \times L^p(\mathbb X, \mathbb R^2)\,$, equipped with the norm $\,\big[\int_\mathbb X \left( |f|^p \,+\,|g|^p \right) \,\big]^{1/p}\,$, is uniformly convex as well.
 \end{remark}
\section{Completing the proof of Theorem \ref{maintheorem}} We now return to the surfaces $\,\mathbb X \subset \mathscr X\,$ and $\,\mathbb Y\subset \mathscr Y\,$ and construct the mappings $\,h_j : \overline{\mathbb X} \onto \overline{\mathbb Y}\,$ stated in Theorem \ref{maintheorem}. The arguments are similar to those used in  \textbf{Steps 3,4}\; and \textbf{5}. The main difference, however,  is that we now choose and fix one particular  finite cover of $\,\mathbb Y\,$ by cells. Having Proposition \ref{ApproximationInPrecell} in hands there will be no need to partition those cell into smaller cells. We adopt  analogous notation. Thus,  we let  $\,\mathbb Y\subset \mathscr Y \,$ be covered by Lipschitz cells $\,\mathcal Q_1 , \mathcal Q_2, ... , \mathcal Q_N\,$, including both internal and boundary cells. This time $\,N\,$ is fixed for the rest of our proof. In symbols,
\begin{itemize}
\item $\,\mathbb Y = \,\mathcal Q_1 \cup \mathcal Q_2 \cup ... \cup \mathcal Q_N\,$ \vskip0.2cm
\item  $\, 1 \leqslant \sum_{i=1}^N \textnormal{\Large $\chi$}_{\overline{\mathcal Q}_{\,i}}(y)\; \leqslant N\;, \;\;\;\;\;\;\;\;\;\textnormal{for} \;y \in \mathbb Y \,$
\end{itemize}
Let $\,\varepsilon\,$ be any positive number. As before, we proceed by induction to define a chain $\,F_1\rightsquigarrow F_2\rightsquigarrow... \rightsquigarrow F_N \rightsquigarrow F_{N+1}\,$  of monotone mappings $\, F_i : \overline{\mathbb X } \onto \overline{\mathbb Y}\,$. The first map is   $\,F_1 \bydef h\,$.
In the induction step we appeal to Proposition \ref{ApproximationInPrecell}.
 Suppose we are given a monotone map $\,F_i  : \overline{\mathbb X} \onto \overline{\mathbb Y}\,$ of Sobolev class $\,\mathscr W^{1,p} (\mathbb X, \mathbb Y)\,$  and its monotone extension $\,F_i  : \widehat{\mathscr X} \onto \widehat{\mathscr Y}\,$. Consider the cell $\,\mathcal Q_i  \subset \mathbb Y\,$ and the corresponding  pre-cell $\,\mathcal U_{\,i}  \subset \mathbb X\,$ for $\,F_i\,$. Then, by Proposition \ref{ApproximationInPrecell}, there exists a monotone map
  $\,F_{i+1} : \overline{\mathbb X} \onto \overline{\mathbb Y}\,$ such that:
  \begin{itemize}
  \item[(a)] $\,F_{i+1} \in  \mathscr C(\overline{\mathbb X}, \overline{\mathbb Y}) \cap \mathscr W^{1,p}(\mathbb X, \mathbb R^3)\,$\vskip0.1cm
      \item[(b)] $\,F_{i+1} = F_i \; : \; \overline{\mathbb X} \setminus \mathcal U_{\,i} \onto \overline{\mathbb Y} \setminus \mathcal Q_{\,i}\,,$\vskip0.1cm
      \item[(c)] $\, F_{i+1} : \mathcal U_{\,i} \onto \mathcal Q _i\,\; \;\;\;\;\textnormal{is a homeomorphism} ,$\vskip0.1cm
      \item[(d)] $\,| F_{i+1} - F_i | \leqslant  \frac{\varepsilon}{2 N} \, \;\;\;\;\textnormal{everywhere in }\, \overline{\mathbb X}\,,$\vskip0.1cm
      \item[(e)] $\,\|\;DF_{i+1}  -  DF_i \;\|_{\mathscr L^p(\mathbb X)}  \leqslant \frac{\varepsilon}{2 N} $
  \end{itemize}

     In each induction step, passing from $\,F_i\,$ to $\,F_{i+1}\,$ we gain injectivity of $\,F_{i+1}\,$  within  $\,\mathcal U_{\,i}\,$, and at the same time produce no branch points in the previous pre-cells. Thus $\,F_{N+1} : \mathbb X \onto \mathbb Y\,$ is a local homeomorphism. Arguing as in \textbf{Step 4}, since $\,F_{N+1} : \overline{\mathbb X} \onto \overline{\mathbb Y}\,$ is monotone, we see that $\,F_{N+1} : \mathbb X \onto \mathbb Y\,$ is a homeomorphism. Lastly, by triangle inequality, we obtain
     \begin{eqnarray}
    & &\,|\!|\!| F_{N+1} - h  |\!|\!|_{\mathscr R^p(\mathbb X)} \;\deff \; |\!|F_{N+1} - F_1 |\!|_{\mathscr C(\overline{\mathbb X})} \;+\;  |\!|DF_{N+1} - DF_1 |\!|_{\mathscr L^{p}(\mathbb X)} \,\;\leqslant\;\;\;\;\; \nonumber \\
     & & \sum_{i= 1}^N  |\!|F_{i+1} - F_i |\!|_{\mathscr C(\overline{\mathbb X})}\, +\;\sum_{i= 1}^N  |\!|DF_{i+1} - DF_i |\!|_{\mathscr L^{p}(\mathbb X)}  \; \leqslant \frac{\varepsilon}{2} \, +\,\frac{\varepsilon}{2} \;= \varepsilon\nonumber
     \end{eqnarray}
 The proof of Theorem \ref{maintheorem} is concluded by setting $\, \varepsilon = \, 1/j\,$ and $\,h_j = F_{N+1}\,$.

\section{Applications to Thin Plates and Films}\label{Application} Let us  demonstrate  the utility of Theorem \ref{maintheorem}  by establishing the existence of the energy-minimal deformations of thin plates (planar domains) and films (surfaces) for $\,p$ -harmonic type energy.
Recall the reference manifolds and the Jordan domains $\,\mathbb X\subset \mathscr X\,$ and $\,\mathbb Y \subset \mathscr Y\,$.  Here we assume that both $\,\mathbb X\,$ and $\,\mathbb Y\,$  are Lipshitz domains. We examine homeomorphisms $\, h : \mathbb X \rightarrow \mathbb Y\,$ in the Sobolev space $\,\mathscr W^{1,p}(\mathbb X, \mathbb Y) \subset\,\mathscr W^{1,2}(\mathbb X, \mathbb Y)\, , \;2 \leqslant p< \infty\,$,  and their weak limits. Every homeomorphism $\,h \in\,\mathscr W^{1,2}(\mathbb X, \mathbb Y)\,$ extends up to the boundary as a continuous monotone map, still denoted by $\,h : \overline{\mathbb X} \onto \overline{\mathbb Y}\,$. Moreover, we have a uniform bound of the modulus of continuity in terms of the Dirichlet and the $\,\mathscr E_p\,$- energy on a surface, see Subsection \ref {EnergyOnSurface} below for the definition of $\,\mathscr E_p\,$.
\begin{equation}\label{UniformContinuity}
|h(x_1) - h(x_2) |^2 \;\leqslant  \frac{ C(X,Y)\;\mathscr E_2[h]}{\log \big( e + \frac{1}{|x_1 - x_2 |} \,\big) } \;\leqslant  \frac{ C_p(X,Y)\;\big [\,\mathscr E_p[h]\,\big]^{2/p}}{\log \big( e + \frac{1}{|x_1 - x_2 |} \,\big) }
\end{equation}
For $\,h\,$ fixed, its logarithmic modulus of continuity  was already known by Lebesgue \cite{Lebesgue}. However, it is the dependence on the energy of $\,h\,$ that we are specifically concerned (to apply limiting arguments).  
The proof of estimate \eqref{UniformContinuity} runs along similar lines as for Lipschitz planar domains in~\cite{IO}. There are, however,  routine adjustments necessary to fit the arguments to 2 -dimensional surfaces.\\
 Now a $\,\mathscr W^{1,p}$ -weakly converging sequence of homeomorphisms between $\,\mathbb X\,$ and $\,\mathbb Y\,$ actually converges uniformly. Based on Theorem \ref{maintheorem}\,,  we have
\begin{equation}
\overline{\mathscr H_p}(\mathbb X, \mathbb Y) =  \widetilde{\mathscr H_p}(\mathbb X, \mathbb Y) = \mathscr M_p(\mathbb X, \mathbb Y)
\end{equation}
meaning that (respectively) the strong closure, the sequential weak closure, and the monotone maps in the Sobolev space $\,\mathscr W^{1,p}(\mathbb X, \mathbb Y)\,$ are the same thing.
\subsection{The isotropic $\,p$ -harmonic integral on surfaces}\label{EnergyOnSurface} An intrinsic example of the energy-minimal deformations of thin plates and films is furnished by the $\,p$ -harmonic integral, $\,2\leqslant p < \infty\,$. Suppose we are given a monotone Sobolev map  $\,h \in \mathscr M_p(\mathbb X, \mathbb Y)\,$. To almost every point $\,x \in \mathbb X\,$ there corresponds the linear tangent
map $\, Dh(x)  : \mathbf T_x(\mathbb X)  \into \mathbf T_y(\mathscr Y)\,, \,y = h(x) \in \overline{\mathbb Y} \,$,  and its adjoint  $\, D^*h(x)  : \mathbf T_y(\mathscr Y)  \into \mathbf T_x(\mathbb X) \,$ with respect  to the scalar products in $\, \mathbf T_x(\mathbb X)\,$ and $\,\mathbf T_y(\mathscr Y)\,$. The Cauchy-Green stress tensor $\, \mathbf G_ h \bydef [D^*h]\circ[Dh]  : \mathbf T_x(\mathbb X) \rightarrow \mathbf T_x(\mathbb X)\,$ gives rise to the Hilbert-Schmidt norm of the tangent map, $\, |Dh|  = [\,\textnormal{Trace} \, \mathbf G_ h\,]^{1/2}\,$. Now the isotropic  $p$ -harmonic energy of $\,h\,$  is defined by:
\begin{equation}\label{pHarmonicEnergy}
\mathscr E_p\,[h]  \bydef \int_\mathbb X |Dh(x)|^p\,\textnormal {d} x \;,
\end{equation}
where the area element $\,\textnormal{d} x\,$  is the one induced by the Riemannian metric in $\;\mathbb X\,$. The term \textit{isotropic} refers to the fact that the integrand is invariant under the rotations in $\,\mathbf T_x(\mathbb X)\,$ and $\,\mathbf T_y(\mathbb Y)\,$.
We call $\mathscr E_2 [h] \,$ the Dirichlet energy. The energy in~\eqref{pHarmonicEnergy} fits to the following more general scheme:
\begin{equation}\label{GeneralEnergy}
\mathscr E\,[h]  \bydef \int_\mathbb X \mathbf E (x, h, Dh) \,\textnormal {d} x \;,\;\;\;\textnormal{for mappings}\; \;h \in \mathscr M_p(\mathbb X, \mathbb Y)\,,
\end{equation}
where $\,\mathbf E (x, y, L)\,$ is a given real-valued function defined for $\,x \in \mathscr X\,$ and  $\,y \in \mathscr Y\,$,  and the linear maps $\,L : \mathbf T_x \mathscr X\,\into\,\mathbf T_y \mathscr Y\,\,$.  We shall impose the following conditions on the energy integral  in (\ref{GeneralEnergy});  they  suffice for the application of the Direct Method in the Calculus of Variations:
\begin{itemize}
   \item \textit{coercivity};  \\
    $\,c\int_\mathbb X |Dh(x)|^p\,\textnormal {d} x  \preccurlyeq \int_\mathbb X \mathbf E (x, h, Dh) \,\textnormal {d} x\, \preccurlyeq C\int_\mathbb X |Dh(x)|^p\,\textnormal {d} x \,$.
    \vskip0.2cm
    \item \textit{continuity in the strong topology of $\,\mathscr W^{1,p}(\mathbb X, \mathbb Y)\,$; }\\
    $\, \mathscr E\,[h] = \liminf \mathscr E\,[h_j]\;,\;\;\textnormal{whenever}\; h_j \in \mathscr M_p(\mathbb X, \mathbb Y) \,\,\textnormal{converge strongly to} \;h\,$.\vskip0.2cm
    \item \textit{lower semicontinuity};\\ $\, \mathscr E\,[h] \leqslant \liminf \mathscr E\,[h_j]\;,\;\;\textnormal{whenever}\; h_j \in \mathscr M_p(\mathbb X, \mathbb Y) \,\,\textnormal{converge weakly to} \;h\,$.
\end{itemize}\vskip0.1cm
These conditions hold, in particular, for the $\,p$ -harmonic integral in (\ref{pHarmonicEnergy}). Next, choose and fix a homemorphism $\,\varphi \in \mathscr H_p(\mathbb X, \mathbb Y)\,$ and a compact set $\,\Gamma \subset \partial \mathbb X\,$ (empty in case of traction free problems). We consider the class of Sobolev homeomorphisms,
\begin{equation}
 \mathscr H_p(\mathbb X, \mathbb Y , \Gamma ; \varphi)  \bydef \{ h\in \mathscr H_p(\mathbb X, \mathbb Y) ; \;h_{|\Gamma} = \varphi_{|\Gamma}\,\, \; (\textnormal{upon extension to}\; \partial \mathbb X)\,\}
\end{equation}
By Theorem \ref{maintheorem}, \,its strong and sequential weak closures in  $\,\mathscr W^{1,p}(\mathbb X, \mathbb R^2)\,$ are the same and coincide with the monotone mappings of the class:

\begin{equation}
 \mathscr M_p(\mathbb X, \mathbb Y , \Gamma ; \varphi)  \bydef \{ h\in \mathscr M_p(\mathbb X, \mathbb Y) ; \;h_{|\Gamma} = \varphi_{|\Gamma} \; \}
\end{equation}{\,}\\
Straightforward application of the direct method in the Calculus of Variations yields
\begin{theorem} There always exists the energy-minimal map $\,h_\circ \in \mathscr M_p(\mathbb X, \mathbb Y , \Gamma ; \varphi)\,$ such that
\begin{equation}\nonumber
\mathscr E\,[h_\circ] \,=\, \min_{h \in \mathscr M_p(\mathbb X, \mathbb Y , \Gamma ; \,\varphi)}  \mathscr E\,[h] =\, \inf_{h \in \mathscr H_p(\mathbb X, \mathbb Y , \Gamma ; \,\varphi)}  \mathscr E\,[h]\;\;\;\;(\textnormal{no \textit{Lavrentiev phenomenon}})
\end{equation}
\end{theorem}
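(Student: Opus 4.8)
\emph{The plan is to apply the Direct Method of the Calculus of Variations, using Theorem~\ref{maintheorem} both to keep the competing class closed under the relevant convergence and to identify its infimum with that of the homeomorphisms.} First I fix a minimizing sequence $\,h_j \in \mathscr{M}_p(\mathbb{X}, \mathbb{Y}, \Gamma; \varphi)\,$, so that $\,\mathscr{E}[h_j] \to \inf_{\mathscr{M}_p} \mathscr{E}\,$. By coercivity one has $\,\int_{\mathbb{X}} |Dh_j|^p\,\mathrm{d}x \le c^{-1}\,\mathscr{E}[h_j]\,$ uniformly bounded, so $\,\{h_j\}\,$ is bounded in the reflexive space $\,\mathscr{W}^{1,p}(\mathbb{X}, \mathbb{R}^3)\,$, $\,1<p<\infty\,$. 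Passing to a subsequence, $\,h_j \rightharpoonup h_\circ\,$ weakly in $\,\mathscr{W}^{1,p}\,$.

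The decisive step is to show that $\,h_\circ\,$ remains admissible. This is where the energy-dependent modulus-of-continuity bound \eqref{UniformContinuity} enters. Since each $\,h_j\,$ is monotone (hence a strong $\,\mathscr{W}^{1,p}\,$-limit of homeomorphisms by Theorem~\ref{maintheorem}), the estimate \eqref{UniformContinuity}, which depends only on the energy, passes from homeomorphisms to $\,h_j\,$; with $\,\sup_j \mathscr{E}_p[h_j] < \infty\,$ it makes the family $\,\{h_j\}\,$ equicontinuous on $\,\overline{\mathbb{X}}\,$. By Arzel\`a--Ascoli, after a further subsequence $\,h_j \rightrightarrows h_\circ\,$ uniformly, so the weak Sobolev limit is represented by a continuous map of $\,\overline{\mathbb{X}}\,$ onto $\,\overline{\mathbb{Y}}\,$. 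A uniform limit of monotone mappings is again monotone (Theorem~\ref{HomeomorphicApproximation}), and uniform convergence preserves the constraint $\,h_\circ|_\Gamma = \varphi|_\Gamma\,$; thus $\,h_\circ \in \mathscr{M}_p(\mathbb{X}, \mathbb{Y}, \Gamma; \varphi)\,$. Lower semicontinuity of $\,\mathscr{E}\,$ then yields $\,\mathscr{E}[h_\circ] \le \liminf_j \mathscr{E}[h_j] = \inf_{\mathscr{M}_p}\mathscr{E}\,$, while admissibility of $\,h_\circ\,$ forces the reverse inequality. Hence $\,h_\circ\,$ attains the minimum over $\,\mathscr{M}_p(\mathbb{X}, \mathbb{Y}, \Gamma; \varphi)\,$.

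It remains to exclude the Lavrentiev gap, i.e.\ to prove $\,\min_{\mathscr{M}_p}\mathscr{E} = \inf_{\mathscr{H}_p}\mathscr{E}\,$. The inclusion $\,\mathscr{H}_p(\mathbb{X}, \mathbb{Y}, \Gamma; \varphi) \subset \mathscr{M}_p(\mathbb{X}, \mathbb{Y}, \Gamma; \varphi)\,$ gives $\,\inf_{\mathscr{M}_p}\mathscr{E} \le \inf_{\mathscr{H}_p}\mathscr{E}\,$ for free. For the opposite inequality I take an arbitrary $\,h \in \mathscr{M}_p(\mathbb{X}, \mathbb{Y}, \Gamma; \varphi)\,$ and invoke Theorem~\ref{maintheorem}: it supplies homeomorphisms $\,h_j : \mathbb{X} \onto \mathbb{Y}\,$ with $\,h_j \to h\,$ strongly in $\,\mathscr{W}^{1,p}\,$ and $\,h_j = h\,$ on $\,\partial\mathbb{X}\,$. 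The boundary identity gives $\,h_j|_\Gamma = h|_\Gamma = \varphi|_\Gamma\,$, so $\,h_j \in \mathscr{H}_p(\mathbb{X}, \mathbb{Y}, \Gamma; \varphi)\,$; by the strong-continuity hypothesis on $\,\mathscr{E}\,$ we have $\,\liminf_j \mathscr{E}[h_j] = \mathscr{E}[h]\,$, whence $\,\inf_{\mathscr{H}_p}\mathscr{E} \le \mathscr{E}[h]\,$. Taking the infimum over $\,h \in \mathscr{M}_p\,$ closes the chain, $\,\inf_{\mathscr{H}_p}\mathscr{E} \le \inf_{\mathscr{M}_p}\mathscr{E}\,$, and equality follows.

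I expect the second paragraph to be the real obstacle. Weak $\,\mathscr{W}^{1,p}\,$-convergence by itself preserves neither continuity nor monotonicity, and a priori $\,h_\circ\,$ could fail to lie in $\,\mathscr{M}_p\,$; it is exactly the energy-controlled equicontinuity of \eqref{UniformContinuity} (available because $\,p \ge 2\,$) that promotes the weak limit to a uniform one, after which Youngs' theorem secures monotonicity of the limit. The remaining ingredients --- coercivity, lower semicontinuity, and the strong density of homeomorphisms from Theorem~\ref{maintheorem} --- are then routine to assemble.
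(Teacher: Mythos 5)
Your proposal is correct and follows essentially the same route as the paper: the paper's proof is precisely a direct-method argument resting on the closure identity $\,\overline{\mathscr H_p} = \widetilde{\mathscr H_p} = \mathscr M_p\,$ (obtained from the energy-controlled modulus of continuity \eqref{UniformContinuity} together with Theorem~\ref{maintheorem}) and the three structural hypotheses on $\,\mathscr E\,$, which is exactly what you assemble. Your write-up merely makes explicit the steps the paper leaves as ``straightforward'' --- equicontinuity of the minimizing sequence via \eqref{UniformContinuity}, Arzel\`a--Ascoli, monotonicity of uniform limits via Youngs' theorem, and the use of strong continuity of $\,\mathscr E\,$ with the boundary-preserving approximation of Theorem~\ref{maintheorem} to rule out the Lavrentiev gap.
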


\begin{remark}
The existence of energy-minimal mappings within $\mathscr M_p(\mathbb X, \mathbb Y )$, $p\ge 2$, can also be obtained for many realistic variational integrals in NE, including neohookean energy;
\[\mathcal E [h]= \int_\mathbb X \left(\abs{Dh}^2 + \frac{1}{\det Dh}\right)\qquad \textnormal{for } \; h\in \mathscr M^+_2(\mathbb X, \mathbb Y)\]
Here $ \mathscr M^+_2$ stands for mappings in  $\mathscr M_2(\mathbb X, \mathbb Y) $ with positive Jacobian determinant. For  $\ell$-connected, $2\le \ell < \infty$, Lipschitz domains $\X$ and $\Y$ the class $\mathscr M_2(\mathbb X, \mathbb Y )$, as oppose to $\mathscr H_2(\mathbb X, \mathbb Y )$, is closed under weak convergence in $\W^{1,2} (\X, \Y)$. Applying the direct method in the calculus of variations we see that there always exists an energy-minimal map $h_\circ \in \mathscr M^+_2(\mathbb X, \mathbb Y )$. Now, by Theorem~\ref{maintheorem} the minimizer $h_\circ$ can be approximated uniformly and strongly in $\W^{1,2} (\X , \R^2) $ by homeomorphisms. The key issue is whether
\begin{equation}\label{kavtaht}
\mathcal E [h_\circ] =  \inf_{h \in \mathscr H_2(\mathbb X, \mathbb Y)} \mathcal E [h] 
\end{equation}
It is interesting to notice that~\eqref{kavtaht} holds if \[\frac{1}{\det Dh_\circ} \in \mathscr L^\infty_{\loc} (\X).\] Indeed, this condition implies that $h_\circ$ has locally integrable distortion; that is,
\[\abs{Dh_\circ (x)}^2 \le K(x)\, \det Dh_\circ (x)\, , \quad \textnormal{ where } K\in \mathscr L^1_{\loc} (\X)\, .\]
By~\cite{IS} $h_\circ$ is open and discrete and, being monotone, is a homeomorphism of class $\mathscr H_2 (\X, \Y)$.

\end{remark}
\section{Afterward}
 Remarkably, the existence of traction free minimal deformations requires advanced topological arguments, as compared with the classical Dirichlet boundary value problems. Before further reflections let us take a brief look at one more example. \\
\textbf{An Example.} Given a pair of circular annuli $\,\mathbb X = \{z ; \,r < |z| < R\,\}\,$ and  $\, \mathbb Y = \{ w ; \,1 <|w| < \frac{1}{2} \,( R  + R^{-1} ) \,\}\,$,  $\, r < 1 < R\,$, the energy-minimal deformation in  $\,\mathscr M_2(\mathbb X , \mathbb Y) $, unique up to the rotations, is given by:
\begin{equation}\nonumber
 h(z)= \begin{cases}  \frac{z}{|z|}\, , &\; \; r<|z|\leqslant 1 \hskip0.5cm \\
\frac{1}{2} \left(z + \frac{1}{\overline{z}}\right) \, , &\; \; 1 < |z|< R \hskip1cm \textnormal{\footnotesize{( critical harmonic Nitsche map )}}
\end{cases}
\end{equation}
 see \cite{AIM, IOan}. Squeezing of  $\, \{ z\,; \;r<|z|\leqslant 1\,\} \subset \mathbb X\,$ into the inner circle of $\,\mathbb Y\,$ manifests itself.
Let us confront it with the solution of the classical Dirichlet problem under the same boundary values as $\,h\,$; that is,
\begin{equation}
 \hbar(z)= \begin{cases}  r^{-1} z \, , &\; \; \textnormal{for}\;\;|z| = r \\
\frac{1}{2} \left(1 + R^{-2}\right) z  \, , &\; \;\textnormal{for}\;\; |z| = R
\end{cases}
\end{equation}
The solution is a harmonic mapping

$$
\hbar(z) = A\, z + \frac{B}{\bar{z}} \;, \;\;\textnormal{where}\;\; A = \frac{R^2 - 2r + 1}{2(R^2 - r^2)}\;\;\textnormal{and} \;\;\; B= \frac{r(2 R^2 - rR^2 - r) }{2(R^2 - r^2)}\;
$$
 which, as expected, fails to be injective. The point is that, under this harmonic solution the annulus   $\,\mathbb X\,$ folds along the circle  $\,|z| = \sqrt{\frac{B}{A}}\,$.\\
  It is axiomatic in NE that folding should not occur under hyperelastic deformations. Uniform limits of hyperelastic deformations do not exhibit foldings as well. \\
It is a common struggle in mathematical models of Nonlinear Elasticity \cite{Anb, Bac, Ba3, Ba11, Ba1, Ba4, Cib} to establish existence of the energy-minimal deformations which  comply with the \textit{principle of no interpenetration of matter}. The phenomenon of squeezing a part of a domain, as described by monotone mappings, is inevitable. It is therefore reasonable to adopt  \textit{Monotone Sobolev Mappings} as legitimate deformations in the mathematical description of $\,2D\,$-elasticity (cellular mappings in higher dimensions).

 \bibliographystyle{amsplain}

\end{document}